\newcommand{\N}{\mathbb N}
\newcommand{\R}{\mathbb R}
\newcommand{\QQ}{\mathcal Q}
\newcommand{\RR}{\mathcal R}
\newcommand{\PP}{\mathcal P}
\newcommand{\II}{\mathcal I}
\newcommand{\supp}{\text{supp}}
\newcommand{\xxx}{\mathbf{x}}
\newcommand{\yyy}{\mathbf{y}}
\newcommand{\ttt}{\mathbf{t}}
\newcommand{\ddd}{\mathbf{d}}
\newtheorem{defn}{Definition}[section]
\newtheorem{theorem}{Theorem}[section]
\newtheorem{lemma}{Lemma}[section]
\newcommand{\pushright}[1]{\ifmeasuring@#1\else\omit\hfill$\displaystyle#1$\fi\ignorespaces}
\newcommand{\pushleft}[1]{\ifmeasuring@#1\else\omit$\displaystyle#1$\hfill\fi\ignorespaces}
\def\@maketitle{%
  \newpage
  \null
  \vskip 2em%
  \begin{center}%
  \let \footnote \thanks
    {\Large\bfseries \@title \par}%
    \vskip 1.5em%
    {\normalsize
      \lineskip .5em%
      \begin{tabular}[t]{c}%
        \@author
      \end{tabular}\par}%
    \vskip 1em%
    {\normalsize \@date}%
  \end{center}%
  \par
  \vskip 1.5em}
\numberwithin{equation}{section}
\numberwithin{figure}{section}
\begin{document}

\title{Real-time, local spline interpolation schemes \\on bounded intervals}
\author{Maria D. van der Walt%
\thanks{Electronic address: \texttt{maria.vanderwalt@vanderbilt.edu}}}
\affil{Department of Mathematics, Vanderbilt University, Nashville, TN 37240}
\date{}
\maketitle

\begin{abstract}
We develop a local polynomial spline interpolation scheme for arbitrary spline order on bounded intervals. Our method's local formulation, effective boundary considerations and optimal interpolation error rate make it particularly useful for real-time implementation in real-world applications.
\end{abstract}

\section{Introduction}
\label{sec_intro}

The objective of this paper is to explicitly construct polynomial spline interpolation operators for real-valued functions defined on bounded intervals. Specifically, for a function $f: \ [a,b]\rightarrow\R$ and a strictly increasing sequence of sampling points $\left\{ y_0, y_1,\ldots,y_N \right\} \subseteq [a,b]$, our goal is to construct an interpolation operator ${\PP}_m$ in terms of the $m^{\textup{th}}$ order B-splines (where $m\geq3$) such that the following conditions are satisfied:
\begin{enumerate}[(i)]
\item \label{it_blending_cond_i}
${\PP}_m$ is local in the sense that the value of ${\PP}_{m}f$ at any $y^{*}\in[a,b]$ only depends on the values of $f$ in a small neighborhood of $y^{*}$;
\item \label{it_blending_cond_ii}
${\PP}_m$ preserves polynomials of degree $\leq m-1$; that is,
\begin{equation}
({\PP}_{m} p)(x) = p(x), \quad p\in\pi_{m-1}, \ x\in[a, b];
\label{eq_poly_preserve}
\end{equation}
\item \label{it_blending_cond_iii}
${\PP}_{m}f$ interpolates $f$ at the interpolation points $\left\{ y_0, y_1,\ldots,y_N \right\}$; that is,
\begin{equation}
({\PP}_{m} f)(y_{i}) = f(y_{i}), \quad i=0,1,\ldots,N;
\label{eq_interpolatory_cond}
\end{equation}
\item \label{it_blending_cond_iv}
${\PP}_{m}$ preserves derivatives of $f$ of order $\ell$ (for $\ell=1,2,\ldots,m-1$) at $a$ and $b$, so that
\begin{equation}
({\PP}_{m} f)^{(\ell)}(a) = f^{(\ell)}(a); \quad ({\PP}_{m} f)^{(\ell)}(b) = f^{(\ell)}(b), \quad \ell=1,2,\ldots,m-1.
\label{eq_hermite_interpolatory_cond}
\end{equation}
\end{enumerate}

(In (\ref{it_blending_cond_iv}), since the derivatives of the function $f$ might not be known in practice, we approximate the $\ell^{\textup{th}}$ derivative of $f$ at $a$ and $b$ by the $\ell^{\textup{th}}$ order divided difference of $f$ at $a$ and $b$, respectively, when applying our method.)\\

Our idea is to start by developing a locally supported quasi-interpolation operator ${\QQ}_{m}$ in terms of the $m^{\textup{th}}$ order B-splines on the bounded interval $[a,b]$ to achieve properties (\ref{it_blending_cond_i}) and (\ref{it_blending_cond_ii}), as well as high approximation order \cite{chuistudents,lyche1975local}. Then, to satisfy the Hermite interpolation conditions (\ref{it_blending_cond_iii}) and (\ref{it_blending_cond_iv}) above (while preserving local support in (\ref{it_blending_cond_i})), we construct a local interpolation operator ${\RR}_{m}$ as well, leading to the blending operator ${\PP}_{m}$, as introduced in \cite{chuidiamond}, defined by
\begin{equation}
{\PP}_{m} := {\RR}_{m} \oplus {\QQ}_{m},
\label{eq_PP}
\end{equation}
where
\begin{equation}
{\RR}_{m} \oplus {\QQ}_{m} := {\QQ}_{m} + {\RR}_{m}({\II}_{m} - {\QQ}_{m}) = {\QQ}_{m} + {\RR}_{m} - {\RR}_{m}{\QQ}_{m},
\label{eq_RR+QQ}
\end{equation}
with ${\II}_{m}$ denoting the identity operator. With this definition, ${\PP}_m$ achieves the four objectives in (\ref{it_blending_cond_i})-(\ref{it_blending_cond_iv}) above (as we will show later in this paper).\\

Our approach has several advantages. First, the local formulation make this spline interpolation scheme particularly useful, since it facilitates real-time implementation for fast computation (without matrix inversions). Second, our method provides an effective way to take care of interpolation at the endpoints of the interval $[a,b]$, without extending the function $f$ in any way. Moreover, we will show that this interpolation operator attains the optimal interpolation error rate (away from the boundaries) compared to the traditional spline interpolation scheme. These properties make our method suitable for a wide range of applications, including the popular empirical mode decomposition (EMD) scheme \cite{Huang08031998} for non-stationary signal analysis, where an interpolation scheme is required in its sifting procedure (see \cite{A} for an example of such an application).\\

In Section \ref{sec_quasi}, we start by developing a quasi-interpolation operator ${\QQ}_m$, in terms of the $m^{\textup{th}}$ order B-splines, that preserves polynomials $p$ of degree $\leq m-1$. We remark that local spline quasi-interpolants were first constructed by De Boor and Fix in \cite{deboorfix}, by using both functional and derivative values of the function $f$. Later, Lyche and Schumaker in \cite{lyche1975local} considered local spline quasi-interpolants written in terms of divided differences of $f$. In contrast, our quasi-interpolation operator mainly makes use of functional values, employing divided differences of $f$ only near the endpoints $x=a$ and $x=b$. This is advantageous from a numerical point of view. Our scheme is based on a quasi-interpolation scheme for real-time application described in \cite{chuistudents}; however, the method in \cite{chuistudents} is derived for data values on an unbounded interval, and is adapted here for a bounded interval. Next, in Section \ref{sec_local}, we develop a local interpolation operator ${\RR}_m$ that will achieve properties (\ref{it_blending_cond_iii}) and (\ref{it_blending_cond_iv}), inspired by a method in \cite{2014arXiv1406.1276C}. However, we note that, in contrast to the scheme in \cite{2014arXiv1406.1276C}, our method is defined for a bounded interval and accommodates the Hermite interpolation conditions in (\ref{it_blending_cond_iv}). In Section \ref{sec_blending}, we combine the quasi-interpolation operator ${\QQ}_m$ with the local interpolation operator ${\RR}_m$ in the blending operator, defined in (\ref{eq_PP})-(\ref{eq_RR+QQ}), and show that ${\PP}_m$ preserves the properties of both its constituent operators. In Section \ref{sec_blending_approx_order}, we investigate the error bounds for the blending interpolation operator applied to a general real-valued function $f$. Final remarks follow in Section \ref{sec_final}.


\section{Quasi-interpolation}
\label{sec_quasi}

For a given sequence
\begin{equation}
{\xxx}: \quad x_{-m+1} = \cdots = a = x_0 < x_{1} < \cdots < x_{N+1} = b = \cdots = x_{N+m},
\label{eq_xxx}
\end{equation}
where $m\geq3$ and $N$ is a positive integer, let $S_{{\xxx},m}[a,b]$ denote the linear space of $m^{\textup{th}}$ order polynomial splines on $[a,b]$ with knots in ${\xxx}$. As discussed in \cite{curry_schoenberg,de2001practical}, a locally supported basis for $S_{{\xxx},m}[a,b]$ is given by the set of normalized $m^{\textup{th}}$ order B-splines $\left\{ N_{{\xxx},m,j}: \ j=-m+1,\ldots,N \right\},$ where each $N_{{\xxx},m,j}$ is defined in terms of divided differences of truncated powers:\begin{equation}
N_{{\xxx},m,j}(x) := (x_{j+m} - x_{j}) [x_{j},\ldots,x_{j+m}] (\cdot - x)^{m-1}_{+}, \quad j=-m+1,\ldots,N.
\label{eq_b_spline}
\end{equation}
Let $f$ be a real-valued function, and let
\begin{equation}
{\yyy}: \quad a = y_0 < y_{1} < \cdots < y_{N} = b
\label{eq_yyy_knots_half}
\end{equation}
be a given sequence of (non-uniform) sampling points. In what follows, we develop a spline quasi-interpolation operator ${\QQ}_m$, under the assumption that the B-spline knots are chosen to lie midway between consecutive sampling points -- more precisely, the knots $x_0,\ldots,x_{N+1}$ in ${\xxx}$ in (\ref{eq_xxx}) are defined by
\begin{equation}
x_0 := y_0; \quad x_i := \tfrac{1}{2}\left( y_{i-1} + y_{i}\right), \ i=1,\ldots,N; \quad x_{N+1} := y_N.
\label{eq_xxx_half}
\end{equation}
We remark that the spline knots ${\xxx}$ may also be chosen to coincide with the sampling points ${\yyy}$, with slight alterations of the formulations below. Details of this setup, in terms of the cubic B-splines, may be found in \cite{A}.\\

We will need the following notations. First, $D(x_{j},\ldots,x_{j+m-1})$ denotes the Vandermonde determinant of $x_{j},\ldots,x_{j+m-1}$; that is,
\begin{equation}
D(x_{j},\ldots,x_{j+m-1}) =
\left| \begin{array}{cccc}
1 & 1 & \cdots & 1 \\
x_{j} & x_{j+1} & \cdots & x_{j+m-1} \\
\vdots & \vdots & & \vdots \\
x_{j}^{m-1} & x_{j+1}^{m-1} & \cdots & x_{j+m-1}^{m-1} \end{array} \right|;
\label{eq_vandermonde}
\end{equation}
and $D(x_{j},\ldots, x_{j+k-1}, \xi_{{\xxx},m,\ell}, x_{j+k+1}, \ldots, x_{j+m-1})$ is obtained from $D(x_{j},\ldots,x_{j+m-1})$ by replacing its $(k+1)^{\textup{th}}$ column with the vector
\begin{equation}
\xi_{{\xxx},m,\ell} := [\xi^{0}_{{\xxx}}(\ell),\ldots,\xi^{m-1}_{{\xxx}}(\ell)]^{T},
\label{eq_xi1}
\end{equation}
with
\begin{equation}
\left\{ \begin{aligned}
\xi^{0}_{{\xxx}}(\ell) &= 1;\\
\xi^{n}_{{\xxx}}(\ell)&=\frac{\sigma^{n}(x_{\ell+1},\ldots,x_{\ell+m-1})}{\binom{m-1}{n}}, \quad n=1,\ldots,m-1,
\end{aligned}
\right.
\label{eq_xi2}
\end{equation}
and where $\sigma^{n}(x_{\ell+1},\ldots,x_{\ell+m-1})$ denotes the classical symmetric function, defined by
\begin{equation}
\left\{ \begin{aligned}
\sigma^{0}(x_{\ell+1},\ldots,x_{\ell+m-1}) &= 1;\\
\sigma^{n}(x_{\ell+1},\ldots,x_{\ell+m-1})&=\sum_{\ell+1\leq t_{1} < t_{2} < \cdots < t_{n} \leq \ell+m-1} x_{t_{1}}x_{t_{2}}\cdots x_{t_{n}}, \quad n=1,\ldots,m-1,\\
\end{aligned}
\right.
\label{eq_sigma}
\end{equation}
with the definition that $\sigma^{n}(x_{\ell+1},\ldots,x_{\ell+m-1}) = 0$ if $n\geq m$. Furthermore, \\$D_{C}(x_j, \ldots,x_{j+p},x_{j+p}^{(1)}, \ldots, x_{j+p}^{(q)}, x_{j+p+1},\ldots,x_{j+m-q-1})$ denotes the confluent Vandermonde determinant; that is, for $\ell=1,\ldots,q$, the $(p+1+\ell)^{\textup{th}}$ column of $D_{C}$ is given by
\begin{equation}
(D_{C})_{k,p+1+\ell} = \begin{cases}
0, & {\textup{if}} \ k\leq \ell; \\
\frac{(k-1)!}{(k-1-\ell)!}x_{j+p}^{k-1-\ell}, & {\textup{if}} \ k>\ell.
\end{cases}
\label{eq_confl_vandermonde}
\end{equation}
(In other words, confluent columns are derivatives of the original Vandermonde columns.) The remaining $m-q$ columns of $D_{C}$ are regular Vandermonde columns corresponding to $x_{j},\ldots,x_{j+m-q-1}$ (as in (\ref{eq_vandermonde})). Similar as above, \\$D_{C}(x_j, \ldots,x_{j+p},x_{j+p}^{(1)}, \ldots, x_{j+p}^{(k-1)}, \xi_{{\xxx},m,\ell}, x_{j+p}^{(k+1)}, \ldots, x_{j+p}^{(q)}, x_{j+p+1},\ldots,x_{j+m-q-1})$ is obtained from $D_{C}(x_j, \ldots,x_{j+p},x_{j+p}^{(1)}, \ldots, x_{j+p}^{(q)}, x_{j+p+1},\ldots,x_{j+m-q-1})$ by replacing its $(p+k+1)^{\textup{th}}$ column with $\xi_{{\xxx},m,\ell}$.

\begin{defn}
\label{defn_quasi-interpolant_knots_half}
The quasi-interpolation operator ${\QQ}_{m}$ is defined by
\begin{equation}
({\QQ}_{m}f)(x) := \sum_{\ell=1}^{m-1} f^{(\ell)}(a)M_{m,-\ell}(x) + \sum_{i=0}^{N} f(y_{i})M_{m,i}(x) + \sum_{r=1}^{m-1} f^{(r)}(b) M_{m,N+r}(x),
\label{eq_QQ_knots_half}
\end{equation}
in terms of the spline molecules
\begin{equation}
\left\{ \begin{aligned}
& M_{m,-\ell}(x) := \sum_{j=0}^{m-1-\ell} a_{m,-\ell,j} N_{{\xxx},m,j-m+1}(x), \quad \ell=1,\ldots,m-1; \\
& M_{m,i}(x) := \sum_{j=0}^{m-1} a_{m,i,j} N_{{\xxx}, m, i+j-m+1}(x), \quad i=0,\ldots,N; \\
& M_{m,N+r}(x) := \sum_{j=r}^{m-1} a_{m,N+r,j} N_{{\xxx},m,N+j-m+1}(x), \quad r=1,\ldots,m-1,
\end{aligned}
\right.
\label{eq_M_defn_knots_half}
\end{equation}
where the coefficients are given by:
\begin{itemize}
\item For $i=0,\ldots,m-2, \ j=m-1-i,\ldots,m-1,$ and $i=m-1,\ldots,N+1-m, \ j=0,\ldots,m-1,$ and $i=N-m+2,\ldots,N, \ j=0,\ldots,N-i$:
\begin{equation}
a_{m,i,j} = \frac{D(y_{i+j-m+1}, \ldots, y_{i-1}, \xi_{{\xxx},m,i+j-m+1}, y_{i+1},\ldots,y_{i+j})}{D(y_{i+j-m+1}, \ldots, y_{i+j})};
\label{eq_a_int_half}
\end{equation}
\item For $i=0,\ldots,m-2, \ j=0,\ldots,m-2-i$:
\begin{equation}
a_{m,i,j} = \frac{D_{C}(y_{0}, y_0^{(1)}, \ldots, y_{0}^{(m-1-i-j)}, y_1, \ldots, y_{i-1}, \xi_{{\xxx},m,i+j-m+1}, y_{i+1},\ldots,y_{i+j})}{D_{C}(y_{0}, y_0^{(1)}, \ldots, y_{0}^{(m-1-i-j)}, y_1, \ldots, y_{i+j})};
\label{eq_a_lb1_half}
\end{equation}
\item For $\ell=1,\ldots,m-1, \ j=0,\ldots,m-1-\ell$:
\begin{equation}
a_{m,-\ell,j} = \frac{D_{C}(y_{0}, y_0^{(1)}, \ldots, y_{0}^{(\ell-1)}, \xi_{{\xxx},m,j-m+1}, y_{0}^{(\ell+1)}, \ldots, y_{0}^{(m-1-j)}, y_{1},\ldots,y_{j})}{D_{C}(y_{0}, y_0^{(1)}, \ldots, y_{0}^{(m-1-j)}, y_1, \ldots, y_{j})};
\label{eq_a_lb2_half}
\end{equation}
\item For $i=N-m+2,\ldots,N, \ j=N-i+1,\ldots,m-1$:
\begin{multline}
a_{m,i,j}\\
= \frac{D_{C}(y_{i+j-m+1}, \ldots, y_{i-1}, \xi_{{\xxx},m,i+j-m+1}, y_{i+1},\ldots,y_{N},y_{N}^{(1)},\ldots,y_{N}^{(i+j-N)})}{D_{C}(y_{i+j-m+1}, \ldots, y_{N},y_{N}^{(1)},\ldots,y_{N}^{(i+j-N)})};
\label{eq_a_rb1_half}
\end{multline}
\item For $r=1,\ldots,m-1, \ j=r,\ldots,m-1$:
\begin{multline}
a_{m,N+r,j} \\= \frac{D_{C}(y_{N+j-m+1}, \ldots, y_{N}, y_{N}^{(1)},\ldots, y_{N}^{(r-1)}, \xi_{{\xxx},m,N+j-m+1}, y_{N}^{(r+1)}, \ldots, y_{N}^{(j)})}{D_{C}(y_{N+j-m+1}, \ldots, y_{N}, y_{N}^{(1)},\ldots, y_{N}^{(j)})}.
\label{eq_a_rb2_half}
\end{multline}
\end{itemize}
\end{defn}

Since the molecules in the definition above are written as a linear combination of B-splines, they are compactly supported, with
\begin{equation}
\left\{ \begin{aligned}
& {\supp}M_{m,-\ell} = [x_{0}, x_{m-\ell}], \quad \ell=1,\ldots,m-1;\\
& {\supp}M_{m,i} = [x_{i-m+1}, x_{i+m}], \quad i=0,\ldots,N;\\
& {\supp}M_{m,N+r} = [x_{N-m+1+r}, x_{N+1}], \quad r=1,\ldots,m-1.
\end{aligned}
\right.
\label{eq_M_support_knots_half}
\end{equation}
This ensures the local formulation of the quasi-interpolation operator ${\QQ}_m$.\\

With these definitions, we can show that ${\QQ}_m$ preserves polynomials of degree $\leq m-1$. The following theorem originally appeared in \cite[Theorem~2.1]{chuistudents}, where the result was proved only for an unbounded interval. It is extended here to include the boundary considerations in Definition \ref{defn_quasi-interpolant_knots_half}.

\begin{theorem}
\label{thm_quasi_knots_half}
For $N\geq3m-3$, the quasi-interpolation operator ${\QQ}_m$, formulated in (\ref{eq_QQ_knots_half}) in Definition \ref{defn_quasi-interpolant_knots_half}, satisfies the condition
\begin{equation}
({\QQ}_{m}p)(x) = p(x),
\label{eq_quasi_knots_half}
\end{equation}
for all $x\in [a,b]$ and $p\in\pi_{m-1}$.
\end{theorem}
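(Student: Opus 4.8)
The plan is to reduce the reproduction identity \eqref{eq_quasi_knots_half} to a statement about the individual B-spline coefficients of $\QQ_m p$, matched against the coefficients supplied by the monomial expansion of the B-spline basis. The starting point is the standard fact (Marsden's identity) that for $0\le n\le m-1$,
\[
x^n = \sum_{\ell} \xi^n_{\xxx}(\ell)\, N_{{\xxx},m,\ell}(x),
\]
where $\xi^n_{\xxx}(\ell)=\binom{m-1}{n}^{-1}\sigma^n(x_{\ell+1},\ldots,x_{\ell+m-1})$ is precisely the quantity in \eqref{eq_xi2}; that is, the vector $\xi_{{\xxx},m,\ell}$ is exactly the vector of B-spline coefficients of the monomials $1,x,\ldots,x^{m-1}$ attached to $N_{{\xxx},m,\ell}$. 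Since $\QQ_m$ is linear and $\{1,x,\ldots,x^{m-1}\}$ spans $\pi_{m-1}$, writing $p(x)=\sum_n c_n x^n$ it suffices to show, for every index $\ell$, that the coefficient of $N_{{\xxx},m,\ell}$ in $\QQ_m p$ equals $\sum_n c_n\,\xi^n_{\xxx}(\ell)$; the conclusion then follows from linear independence of the B-splines.

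Next I would read each coefficient $a_{m,i,j}$ through Cramer's rule. In the generic interior case \eqref{eq_a_int_half}, $a_{m,i,j}$ is the ratio of the Vandermonde determinant in the nodes $y_{i+j-m+1},\ldots,y_{i+j}$ with its $y_i$-column replaced by $\xi_{{\xxx},m,\ell}$ (where $\ell=i+j-m+1$) to the plain Vandermonde determinant of the same nodes. Hence $a_{m,i,j}=w_i$, the weight on $y_i$ in the unique solution $\{w_s\}$ of the local moment system $\sum_s w_s\, y_s^{\,n}=\xi^n_{\xxx}(\ell)$ for $n=0,\ldots,m-1$.

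The decisive observation is that this node window depends only on $\ell$. Collecting the contributions of a fixed $N_{{\xxx},m,\ell}$ from the molecules in \eqref{eq_M_defn_knots_half} amounts to letting $(i,j)$ range over the pairs with $i+j-m+1=\ell$, i.e. $i=\ell,\ldots,\ell+m-1$, all of which share the single window $y_\ell,\ldots,y_{\ell+m-1}$ and hence the same moment system and the same weights $\{w_s\}$. Consequently the coefficient of $N_{{\xxx},m,\ell}$ in $\QQ_m p$ is
\[
\sum_{s=\ell}^{\ell+m-1} w_s\, p(y_s) = \sum_n c_n \sum_{s} w_s\, y_s^{\,n} = \sum_n c_n\,\xi^n_{\xxx}(\ell),
\]
which is exactly the target coefficient.

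The main obstacle is the boundary bookkeeping. When $\ell$ is small (resp.\ large) enough that the window $y_\ell,\ldots,y_{\ell+m-1}$ would call for nonexistent nodes of negative index (resp.\ index exceeding $N$), the construction replaces them by confluent nodes $y_0,y_0^{(1)},\ldots$ (resp.\ $y_N,y_N^{(1)},\ldots$); this is the source of the derivative terms $f^{(\ell)}(a)$, $f^{(r)}(b)$ and of the confluent determinants $D_C$ in \eqref{eq_a_lb1_half}, \eqref{eq_a_lb2_half}, \eqref{eq_a_rb1_half} and \eqref{eq_a_rb2_half}. I would repeat the Cramer's-rule argument verbatim with $D_C$ in place of $D$: the confluent moment system reproduces $\xi^n_{\xxx}(\ell)$ from the values $p(y_s)$ together with the derivatives $p^{(t)}(a)$ or $p^{(t)}(b)$, and since $p\in\pi_{m-1}$ these boundary derivatives are the genuine derivatives of $p$, so the identical cancellation delivers $\sum_n c_n\,\xi^n_{\xxx}(\ell)$ once more. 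The hypothesis $N\geq 3m-3$ enters precisely here, guaranteeing that the left-boundary, interior, and right-boundary index ranges are disjoint, so that the three molecule families in \eqref{eq_M_defn_knots_half} assemble each coefficient of $N_{{\xxx},m,\ell}$ from exactly one regime without interference.
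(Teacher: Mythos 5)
Your proposal is correct and takes essentially the same route as the paper's proof: both expand ${\QQ}_m p$ in the B-spline basis, invoke Marsden's identity together with linear independence of the B-splines, and identify the coefficients $a_{m,i,j}$ via Cramer's rule applied to the Vandermonde moment systems, with confluent Vandermonde columns absorbing the boundary derivative terms $p^{(t)}(a)$, $p^{(t)}(b)$. Your organization per B-spline index (grouping the pairs $(i,j)$ with $i+j-m+1=\ell$) and your forward, sufficiency-style phrasing are merely a repackaging of the paper's interval-by-interval coefficient comparison over the three regimes, so no substantive difference remains.
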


\begin{proof}

First, if $x\in[x_{2m-2}, x_{N-2m+3}]$, (\ref{eq_a_int_half}) in Definition \ref{defn_quasi-interpolant_knots_half} follows as in \cite[Theorem~2.1]{chuistudents}. Next, let $x\in[a,x_{2m-2}]$, so that (\ref{eq_QQ_knots_half}) becomes
\begin{equation}
({\QQ}_{m}f)(x) = \sum_{\ell=1}^{m-1}f^{(\ell)}(a)M_{m,-\ell}(x) + \sum_{i=0}^{3m-4} f(y_{i})M_{m,i}(x),
\label{eq_Qm_lb}
\end{equation}
from the support properties of $M_{m,i}$ in (\ref{eq_M_support_knots_half}). We proceed to show that the constants $a_{m,i,j}, \ i=-m+1,\ldots,m-2,$ satisfy the formulation (\ref{eq_a_int_half})-(\ref{eq_a_lb2_half}) in Definition \ref{defn_quasi-interpolant_knots_half} if ${\QQ}_m$ satisfies (\ref{eq_quasi_knots_half}), for $p(x) = x^{t}, \ t = 0,\ldots,m-1$. By using (\ref{eq_Qm_lb}) and the first two equations in (\ref{eq_M_defn_knots_half}), the left hand side of (\ref{eq_quasi_knots_half}) becomes
\begin{align}
({\QQ}_{m}p)(x) &= \sum_{\ell=1}^{m-1} \tbinom{t}{\ell}\ell!y_0^{t-\ell} \sum_{j=0}^{m-1-\ell} a_{m,-\ell,j} N_{{\xxx},m,j-m+1}(x) {+ \sum_{i=0}^{3m-4} y_{i}^{t} \sum_{j=0}^{m-1} a_{m,i,j} N_{{\xxx},m,i+j-m+1}(x)} \nonumber \\
&= \sum_{\ell=1}^{m-1}\sum_{k=-m+1}^{-\ell} \tbinom{t}{\ell}\ell!y_0^{t-\ell} a_{m,-\ell,k+m-1}N_{{\xxx},m,k}(x) \nonumber \\
& \pushright{+ \sum_{j=0}^{m-1}\sum_{k=j-m+1}^{2m+j-3} y_{k-j+m-1}^{t}a_{m,k-j+m-1,j} N_{{\xxx},m,k}(x)} \nonumber \\
&= \sum_{k=-m+1}^{-1} \sum_{\ell=1}^{-k} \tbinom{t}{\ell}\ell!y_0^{t-\ell} a_{m,-\ell,k+m-1}N_{{\xxx},m,k}(x) \nonumber \\
& \quad \quad {+ \sum_{k=-m+1}^{3m-4}\sum_{j=\max\left\{0,k-2m+3\right\}}^{\min\left\{m-1,k+m-1\right\}} y_{k-j+m-1}^{t}a_{m,k-j+m-1,j} N_{{\xxx},m,k}(x).}
\label{eq_proof2}
\end{align}
Combining (\ref{eq_quasi_knots_half}) and (\ref{eq_proof2}) with Marsden's identity on the interval $[a,x_{2m-2}]$, given by
\begin{equation*}
x^{t} = \sum_{k=-m+1}^{2m-3} \xi^{t}_{\xxx}(k) N_{{\xxx},m,k}(x), \quad t=0,\ldots,m-1,
\end{equation*}
we have
\begin{multline*}
\sum_{k=-m+1}^{-1} \left[\sum_{\ell=1}^{-k} \tbinom{t}{\ell}\ell!y_0^{t-\ell} a_{m,-\ell,k+m-1} + \sum_{j=0}^{k+m-1} y_{k-j+m-1}^{t}a_{m,k-j+m-1,j}\right] N_{{\xxx},m,k}(x) \\
+ \sum_{k=0}^{m-2} \left[\sum_{j=0}^{m-1} y_{k-j+m-1}^{t} a_{m,k-j+m-1,j}\right] N_{{\xxx},m,k}(x)
= \sum_{k=-m+1}^{m-2} \xi^{t}_{\xxx}(k)N_{{\xxx},m,k}(x).
\end{multline*}
The result follows by comparing the left hand side and right hand side for $k=-m+1,\ldots,m-2,$ and using Cramer's rule. The formulation (\ref{eq_a_int_half}) corresponds to $k=0,\ldots,m-2$, while (\ref{eq_a_lb1_half}) and (\ref{eq_a_lb2_half}) (in terms of confluent Vandermonde determinants) correspond to $k=-m+1,\ldots,-1$.\\

Lastly, when $x\in [x_{N+3-2m},b]$, we obtain the coefficients in (\ref{eq_a_int_half}), (\ref{eq_a_rb1_half}) and (\ref{eq_a_rb2_half}) under the requirement that ${\QQ}_m$ satisfies (\ref{eq_quasi_knots_half}). The proof follows a similar pattern as the proof in part (b) above and is omitted here.
\end{proof}


\section{Local interpolation}
\label{sec_local}

We proceed to define the local spline interpolation operator ${\RR}_m$ to satisfy the Hermite interpolation conditions (\ref{it_blending_cond_iii}) and (\ref{it_blending_cond_iv}) in Section \ref{sec_intro}. As in (\ref{eq_xxx_half}), we construct the spline knots to lie midway between consecutive sampling points. We also consider a knot sequence ${\ttt} \supset {\yyy}$, which is constructed as follows:\\

First, suppose $m$ is even. With $q_m := m/2$, we insert $q_m-1$ equally spaced knots in between every two (interior) knots of ${\yyy}$, so that $t_{m+(j-1)q_m} = y_j$ for $j=1,\ldots,N-1.$ Furthermore, to facilitate the Hermite interpolation conditions at the boundaries, we also insert $m-1$ evenly spaced knots $t_1,\ldots,t_{m-1}$ in the interval $\left(y_0,y_1\right)$, with $t_0 := x_0$, as well as $m-1$ evenly spaced knots $t_{m+(N-2)q_m+1},\ldots,t_{2m+(N-2)q_m-1}$ in the interval $\left(y_{N-1},y_{N}\right)$, with $t_{2m+(N-2)q_m} := y_{N}$. The knot sequence ${\ttt}$ is also extended with stacked knots, with $t_{-m+1} = \cdots = t_0$ and $t_{2m+(N-2)q_m} = \cdots = t_{3m+(N-2)q_m-1}$. With this setup, we define the knot sequences ${\ttt}_{-k}, \ k=0,\ldots,m-1,$ and ${\ttt}_{N+k}, \ k=0,\ldots,m-1,$ by
\begin{equation}
\left\{
\begin{aligned}
& {\ttt}_{-k} = \left\{ t_{-k}, \ldots, \underbrace{t_{0}, \ldots, t_{m}}_{\text{$m+1-k$ knots}} \right\}, \quad k=0,\ldots,m-1; \\
& {\ttt}_{N+k} = \left\{ \underbrace{t_{m+(N-2)q_m}, \ldots, t_{2m+(N-2)q_m}}_{\text{$m+1-k$ knots}}, \ldots, t_{2m+(N-2)q_m+k} \right\}, \ {k=0,\ldots,m-1,}
\end{aligned}
\right.
\label{eq_ttt_knots_half_m_even}
\end{equation}
where the $m+1-k$ knots in $[t_0,t_m]$ are chosen to be evenly spread out among $t_0,\ldots,t_m$; and the $m+1-k$ knots in $[t_{m+(N-2)q_m},t_{2m+(N-2)q_m}]$ are chosen to be evenly spread out in among $t_{m+(N-2)q_m},\ldots,t_{2m+(N-2)q_m}$. Lastly, the knot sequences ${\ttt}_1$ and ${\ttt}_{N-1}$ are defined by
\begin{equation*}
{\ttt}_1 = \left\{ t_{m-q_m}, \ldots, t_{m+q_m} \right\}; \quad {\ttt}_{N-1} = \left\{ t_{m+(N-3)q_m}, \ldots, t_{m+(N-1)q_m} \right\}.
\end{equation*}

We note that, with the above definitions, each ${\ttt}_k, \ k=-m+1,\ldots,1; \ N-1,\ldots,N+m-1,$ contain exactly $m+1$ knots.\\

Second, let $m$ be odd. With $r_m := (m+1)/{2}$, we insert $r_m-1$ equally spaced knots in $(y_{j}, y_{j+1})$ if $j$ is even, and $r_m-2$ equally spaced knots in $(y_{j},y_{j+1})$ if $j$ is odd, so that $t_{m+(j-1)r_m - \lfloor j/2 \rfloor} = y_j$ for $j=1,\ldots,N-1.$ Furthermore, we also insert $m-1$ evenly spaced knots $t_1,\ldots,t_{m-1}$ in the interval $\left(y_0,y_1\right)$, with $t_0 := y_0$, as well as $m-1$ evenly spaced knots $t_{m+(N-2)r_m-\lfloor (N-1)/2 \rfloor+1},\ldots,t_{2m+(N-2)r_m-\lfloor (N-1)/2 \rfloor -1}$ in the interval $\left(y_N,y_{N+1}\right)$, with $t_{2m+(N-2)r_m-\lfloor (N-1)/2 \rfloor} := y_{N}$. The knot sequence ${\ttt}$ is also extended with stacked knots, with $t_{-m+1} = \cdots = t_{0}$ and $t_{2m+(N-2)r_m-\lfloor (N-1)/2 \rfloor} = \cdots = t_{3m+(N-2)r_m-\lfloor (N-1)/2 \rfloor -1}$. With this setup, we define the knot sequences ${\ttt}_{-k}, \ k=0,\ldots,m-1,$ and ${\ttt}_{N+k}, \ k=0,\ldots,m-1,$ by
\begin{equation}
\left\{
\begin{aligned}
& {\ttt}_{-k} = \left\{ t_{-k}, \ldots, \underbrace{t_{0}, \ldots, t_{m}}_{\text{$m+1-k$ knots}} \right\}, \quad k=0,\ldots,m-1; \\
& {\ttt}_{N+k} = \left\{ \underbrace{t_{m+(N-2)r_m - \lfloor (N-1)/2 \rfloor}, \ldots, t_{2m+(N-2)r_m-\lfloor (N-1)/2 \rfloor}}_{\text{$m+1-k$ knots}}, \ldots, t_{2m+(N-2)r_m-\lfloor (N-1)/2 \rfloor +k} \right\}, \\
& \pushright{ \quad k=0,\ldots,m-1,}
\end{aligned}
\right.
\label{eq_ttt_knots_half_m_odd}
\end{equation}
where the $m+1-k$ knots in $[t_0,t_m]$ are chosen to be evenly spread out among $t_0,\ldots,t_m$; and the $m+1-k$ knots in $[t_{m+(N-2)r_m - \lfloor (N-1)/2 \rfloor},t_{2m+(N-2)r_m -\lfloor (N-1)/2 \rfloor}]$ are chosen to be evenly spread out among $t_{m+(N-2)r_m - \lfloor (N-1)/2 \rfloor},\ldots,t_{2m+(N-2)r_m -\lfloor (N-1)/2 \rfloor}$. Lastly, the knot sequences ${\ttt}_1$ and ${\ttt}_{N-1}$ are defined by
\begin{equation*}
{\ttt}_1 = \left\{ t_{m-r_m}, \ldots, t_{m+r_m-1} \right\},
\end{equation*}
and
\begin{equation*}
\left\{
\begin{aligned}
& {\ttt}_{N-1} = \left\{ t_{m+(N-3)r_m-\lfloor(N-2)/2 \rfloor}, \ldots, t_{m+(N-1)r_m - \lfloor (N-1)/2 \rfloor-1} \right\}, \quad  {\textup{if }} N {\textup{ is even}} ;\\
& {\ttt}_{N-1} = \left\{ t_{m+(N-3)r_m-\lfloor(N-2)/2 \rfloor}, \ldots, t_{m+(N-1)r_m - \lfloor (N-1)/2 \rfloor} \right\}, \quad  {\textup{if }} N {\textup{ is odd}}.
\end{aligned}
\right.
\end{equation*}

Again, we note that with the above definitions, each ${\ttt}_k, \ k=-m+1,\ldots,1; \ N-1,\ldots,N+m-1,$ contain exactly $m+1$ knots.

\begin{defn}
\label{defn_local_interpolant_knots_half}
The local interpolation operator ${\RR}_m$ is defined by
\begin{equation}
({\RR}_{m}f)(x) := \sum_{\ell=1}^{m-1}f^{(\ell)}(a)L_{m,-\ell}(x) + \sum_{i=0}^{N} f(y_{i})L_{m,i}(x) + \sum_{r=1}^{m-1}f^{(r)}(b)L_{m,N+r}(x),
\label{eq_RR_knots_half}
\end{equation}
in terms of the spline molecules
\begin{equation}
\left\{ \begin{aligned}
& L_{m,-\ell}(x) := \sum_{k=0}^{m-1}b_{m,-\ell,k}N_{{\ttt}_{-m+1+k},m}(x), \quad \ell=0,\ldots,m-1; \\
& L_{m,1}(x) := \frac{N_{{\ttt}_1,m}(x)}{N_{{\ttt}_1,m}(y_1)}; \\
& L_{m,i}(x) := \frac{N_{{\ttt},m,m+(i-2)q_m}(x)}{N_{{\ttt},m,m+(i-2)q_m}(y_i)}, \quad i=2,\ldots,N-2, \quad {\textup{if }} m {\textup{ is even;}} \\
& L_{m,i}(x) := \frac{N_{{\ttt},m,m+(i-2)r_m-\lfloor (i-1)/2 \rfloor}(x)}{N_{{\ttt},m,m+(i-2)r_m -\lfloor (i-1)/2 \rfloor}(y_i)}, \quad i=2,\ldots,N-2, \quad {\textup{if }} m {\textup{ is odd;}} \\
& L_{m,N-1}(x) := \frac{N_{{\ttt}_{N-1,m}}(x)}{N_{{\ttt}_{N-1,m}}(y_{N-1})}; \\
& L_{m,N+r}(x) := \sum_{k=0}^{m-1} b_{m,N+r,k} N_{{\ttt}_{N+k},m}(x), \quad r=0,\ldots,m-1,
\end{aligned}
\right.
\label{eq_L_defn_knots_half}
\end{equation}
with the coefficients $b_{m,-\ell,k}, \ k,\ell=0,\ldots,m-1,$ and $b_{m,N+r,k}, \ k, r=0,\ldots,m-1,$ determined by the conditions
\begin{equation}
L_{m,-\ell}^{(n)}(a) = \delta_{\ell-n}; \quad 
L_{m,N+r}^{(n)}(b) = \delta_{r-n}, \quad \ell, r, \ n=0,\ldots,m-1.
\label{eq_b_cond_knots_half}
\end{equation}
\end{defn}

It is clear that the above molecules are compactly supported to ensure a local formulation, with
\begin{equation}
\left\{ \begin{aligned}
& {\supp}L_{m,-\ell} = [y_{0}, y_{1}], \quad \ell=0,\ldots,m-1;\\
& {\supp}L_{m,1} = [t_{m-q_m}, y_2], \ {\textup{if }} m \textup{ is even}; \quad {\supp}L_{m,1} = [t_{m-r_m}, y_2], \ {\textup{if }} m \textup{ is odd};\\
& {\supp}L_{m,i} = [y_{i-1}, y_{i+1}], \quad i=2,\ldots,N-2;\\
& {\supp}L_{m,N-1} = [y_{N-2}, t_{m+(N-1)q_m}], \quad {\textup{if }} m \textup{ is even};\\
& {\supp}L_{m,N-1} = [y_{N-2}, t_{m+(N-1)r_m - \lfloor (N-1)/2 \rfloor-1} ], \ {\textup{if }} m {\textup{ is odd}}, \ N {\textup{ is even}};\\
& {\supp}L_{m,N-1} = [y_{N-2}, t_{m+(N-1)r_m - \lfloor (N-1)/2 \rfloor} ], \ {\textup{if }} m {\textup{ is odd}}, \ N {\textup{ is odd}};\\
& {\supp}L_{m,N+r} = [y_{N-1}, y_{N}], \quad r=0,\ldots,m-1.
\end{aligned}
\right.
\label{eq_L_support_knots_half}
\end{equation}

From the construction in (\ref{eq_L_defn_knots_half}), it is clear that
\begin{equation}
L_{m,i}(y_j) = \delta_{i-j}, \quad i=1,\ldots,N-1; \ j=0,\ldots,N.
\label{eq_L_interpolates_knots_half}
\end{equation}

By using also (\ref{eq_b_cond_knots_half}), the following result follows immediately.

\begin{theorem}
\label{thm_local_interpolant_knots_half}
The local interpolation operator ${\RR}_m$, formulated in (\ref{eq_RR_knots_half}) in Definition \ref{defn_local_interpolant_knots_half}, satisfies the Hermite interpolation conditions
\begin{gather*}
({\RR}_m f)(y_i) = f(y_i), \quad i=0,\ldots,N;\\
({\RR}_m f)^{(n)}(a) = f^{(n)}(a), \quad ({\RR}_m f)^{(n)}(b) = f^{(n)}(b), \quad n=1,\ldots,m-1.
\end{gather*}
\end{theorem}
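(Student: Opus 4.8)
The plan is to substitute the definition (\ref{eq_RR_knots_half}) into each asserted identity and evaluate the contribution of every molecule at the point in question, so that all the conditions reduce to bookkeeping of which molecules (or which of their derivatives) are nonzero there. Everything is then controlled by three facts already in hand: the support list (\ref{eq_L_support_knots_half}), the interior interpolation identity (\ref{eq_L_interpolates_knots_half}), and the Hermite normalization (\ref{eq_b_cond_knots_half}); recall also that $a=y_0$ and $b=y_N$.

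For the value conditions, fix $j\in\{0,\ldots,N\}$ and write
\[
(\RR_m f)(y_j) = \sum_{\ell=1}^{m-1} f^{(\ell)}(a)\,L_{m,-\ell}(y_j) + \sum_{i=0}^{N} f(y_i)\,L_{m,i}(y_j) + \sum_{r=1}^{m-1} f^{(r)}(b)\,L_{m,N+r}(y_j).
\]
In the middle sum, the terms with $1\le i\le N-1$ contribute $\sum_i f(y_i)\,\delta_{i-j}=f(y_j)$ by (\ref{eq_L_interpolates_knots_half}), so it remains to see that every other molecule vanishes at $y_j$. The molecules $L_{m,-\ell}$ ($\ell\ge1$) and $L_{m,0}$ are supported on $[y_0,y_1]$, hence vanish at $y_j$ for $j\ge2$; at $j=0$ their values are read off directly from (\ref{eq_b_cond_knots_half}) (with $n=0$), giving $L_{m,-\ell}(a)=\delta_\ell=0$ and $L_{m,0}(a)=1$. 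The molecules $L_{m,N}$ and $L_{m,N+r}$ are treated symmetrically on $[y_{N-1},y_N]$. This leaves only the evaluations at $y_1$ and $y_{N-1}$, which are addressed below; granting those, the value conditions follow.

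For the derivative conditions at $a$, I would differentiate $n$ times and evaluate at $a$, obtaining
\[
(\RR_m f)^{(n)}(a) = \sum_{\ell=1}^{m-1} f^{(\ell)}(a)\,L_{m,-\ell}^{(n)}(a) + \sum_{i=0}^{N} f(y_i)\,L_{m,i}^{(n)}(a) + \sum_{r=1}^{m-1} f^{(r)}(b)\,L_{m,N+r}^{(n)}(a).
\]
Every molecule other than $L_{m,-\ell}$, $\ell=0,\ldots,m-1$, is supported in a set whose closure misses $a$ (the leftmost knot of $\supp L_{m,1}$ lies strictly inside $(y_0,y_1)$, and the remaining supports start at $y_{i-1}\ge y_1$ or later), so all their derivatives vanish at $a$. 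The surviving terms are governed by (\ref{eq_b_cond_knots_half}), which gives $L_{m,-\ell}^{(n)}(a)=\delta_{\ell-n}$, whence $(\RR_m f)^{(n)}(a)=f^{(n)}(a)$ for $n=1,\ldots,m-1$. The conditions at $b$ follow identically, with the left and right molecule clusters interchanged.

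The only step that is not purely formal is the vanishing of the left boundary molecules at $y_1$, and symmetrically of the right ones at $y_{N-1}$, since these are extreme endpoints of the relevant supports rather than exterior points. Here I would use that $y_1=t_m$ is a \emph{simple} knot of $\ttt$: each constituent B-spline $N_{\ttt_{-m+1+k},m}$ of $L_{m,-\ell}$ has $y_1$ as the right end of its support, and a normalized $m$th order B-spline ($m\ge2$) is zero at a simple endpoint knot of its support, so $N_{\ttt_{-m+1+k},m}(y_1)=0$ for every $k$; summing against the coefficients $b_{m,-\ell,k}$ gives $L_{m,-\ell}(y_1)=0$ for all $\ell=0,\ldots,m-1$. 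The identical argument at the simple knot $y_{N-1}$ (the common left endpoint of the supports of $L_{m,N}$ and $L_{m,N+r}$) yields $L_{m,N+r}(y_{N-1})=0$, and combining with the reductions above completes the verification of all three families of conditions. I expect this endpoint vanishing — rather than any of the substitutions — to be the crux, since it is the one place where the specific knot structure of $\ttt$, and not merely the normalization (\ref{eq_b_cond_knots_half}) and the supports, is used.
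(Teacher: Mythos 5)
Your proof is correct and takes essentially the same approach as the paper, whose proof is the single observation that the theorem ``follows immediately'' from the interpolation identity (\ref{eq_L_interpolates_knots_half}), the Hermite normalization (\ref{eq_b_cond_knots_half}), and the supports (\ref{eq_L_support_knots_half}) --- precisely the bookkeeping you carry out. Your simple-knot argument showing $L_{m,-\ell}(y_1)=0$ and $L_{m,N+r}(y_{N-1})=0$ (using that $y_1=t_m$ and $y_{N-1}$ are simple knots, so each constituent B-spline vanishes at the corresponding endpoint of its support) correctly supplies the one detail the paper leaves implicit in ``clear from the construction.''
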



\section{Blending interpolation}
\label{sec_blending}

With ${\QQ}_m$ and ${\RR}_m$ formulated in Definitions \ref{defn_quasi-interpolant_knots_half} and \ref{defn_local_interpolant_knots_half}, respectively, we can now derive the blending operator as in (\ref{eq_PP})-(\ref{eq_RR+QQ}). More specifically, we have
\begin{align}
{\PP}_{m} &= {\QQ}_{m} + {\RR}_{m} - {\RR}_{m}{\QQ}_{m} \nonumber \\
& = \sum_{\ell=1}^{m-1}f^{(\ell)}(a) M_{m,-\ell}(x) + \sum_{\ell=1}^{m-1} \left[ f^{(\ell)}(a) - ({\QQ}_m f)^{(\ell)}(a) \right] L_{m,-\ell}(x) \nonumber \\
& \quad \quad + \sum_{i=0}^{N}f(y_i) M_{m,i}(x) + \sum_{i=0}^{N} \left[ f(y_i) - ({\QQ}_mf)(y_i) \right] L_{m,i}(x) \nonumber \\
& \quad \quad + \sum_{r=1}^{m-1}f^{(r)}(b) M_{m,N+r}(x) + \sum_{r=1}^{m-1} \left[ f^{(r)}(b) - ({\QQ}_mf)^{(r)}(b) \right] L_{m,N+r}(x).
\label{eq_PP_knots_half}
\end{align}

We can then show that ${\PP}_m$ satisfies all four conditions (\ref{it_blending_cond_i})-(\ref{it_blending_cond_iv}) of Section \ref{sec_intro}, as follows:

\begin{theorem}
\label{thm_PP_conditions}
The blending operator ${\PP}_m$, defined by (\ref{eq_PP_knots_half}), is local and satisfies the polynomial preservation property of the quasi-interpolation operator ${\QQ}_m$ as well as the Hermite interpolation conditions of the local interpolation operator ${\RR}_m$; that is, ${\PP}_m$ satisfies (\ref{eq_poly_preserve})-(\ref{eq_hermite_interpolatory_cond}).
\end{theorem}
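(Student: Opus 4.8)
The plan is to argue entirely at the operator level, reading $\PP_m = \QQ_m + \RR_m(\II_m - \QQ_m)$ from \eqref{eq_RR+QQ} and deducing each of the four conditions from the corresponding property of $\QQ_m$ (Theorem \ref{thm_quasi_knots_half}) or $\RR_m$ (Theorem \ref{thm_local_interpolant_knots_half}). The one observation that makes everything fall into place is that Theorem \ref{thm_local_interpolant_knots_half}, though stated for $f$, is really an identity for the \emph{operator} $\RR_m$: since $\RR_m$ is built in \eqref{eq_RR_knots_half} from the data $g(y_i)$, $g^{(\ell)}(a)$, $g^{(r)}(b)$ through interpolatory molecules, it reproduces those data for \emph{any} sufficiently smooth argument $g$, not merely for $f$. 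I would flag this interpretation at the outset, since the interpolation and Hermite claims both rest on it.

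Starting with polynomial preservation (condition (ii)), I take $p \in \pi_{m-1}$. By Theorem \ref{thm_quasi_knots_half}, $\QQ_m p = p$, so $(\II_m - \QQ_m)p = 0$, and linearity of $\RR_m$ gives $\RR_m\bigl((\II_m - \QQ_m)p\bigr) = 0$; hence $\PP_m p = \QQ_m p + 0 = p$, which is \eqref{eq_poly_preserve}. (This inherits the hypothesis $N \geq 3m-3$ from Theorem \ref{thm_quasi_knots_half}.) For the interpolation and Hermite conditions (iii)--(iv), the key step is to apply $\RR_m$ to the function $g := (\II_m - \QQ_m)f = f - \QQ_m f$. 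At a sampling point $y_i$, Theorem \ref{thm_local_interpolant_knots_half} gives $(\RR_m g)(y_i) = g(y_i) = f(y_i) - (\QQ_m f)(y_i)$, so that $(\PP_m f)(y_i) = (\QQ_m f)(y_i) + (\RR_m g)(y_i) = f(y_i)$, establishing \eqref{eq_interpolatory_cond}. The boundary case is identical: $(\RR_m g)^{(\ell)}(a) = g^{(\ell)}(a) = f^{(\ell)}(a) - (\QQ_m f)^{(\ell)}(a)$, and the two $(\QQ_m f)^{(\ell)}(a)$ contributions cancel to give $(\PP_m f)^{(\ell)}(a) = f^{(\ell)}(a)$ for $\ell = 1,\ldots,m-1$; the endpoint $b$ is handled the same way, yielding \eqref{eq_hermite_interpolatory_cond}. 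This telescoping cancellation is the whole point of the blending construction.

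For locality (condition (i)), I would read off the explicit expansion \eqref{eq_PP_knots_half}: $\PP_m f$ is a finite linear combination of the molecules $M_{m,\cdot}$ and $L_{m,\cdot}$, whose supports are the small intervals listed in \eqref{eq_M_support_knots_half} and \eqref{eq_L_support_knots_half}. At a point $y^{*}\in[a,b]$ only the finitely many molecules whose support contains $y^{*}$ contribute; the coefficients multiplying the $L_{m,\cdot}$ involve $(\QQ_m f)$ evaluated at the nearby nodes $y_i$ (or at $a$, $b$), and by the locality of $\QQ_m$ -- itself a consequence of the compact supports in \eqref{eq_M_support_knots_half} -- each such coefficient depends only on the values of $f$ in a small neighborhood of $y^{*}$. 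Hence $(\PP_m f)(y^{*})$ depends only on $f$ near $y^{*}$.

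The only genuinely non-mechanical points -- what I would treat as the main obstacle -- are twofold: first, justifying that Theorem \ref{thm_local_interpolant_knots_half} transfers to the argument $g = f - \QQ_m f$, which requires noting that $\QQ_m f$ is a spline of order $m$ and therefore admits the one-sided derivatives of orders $1,\ldots,m-1$ at $a$ and $b$ needed for the expressions $(\QQ_m f)^{(\ell)}(a)$ and $(\QQ_m f)^{(r)}(b)$ to be well defined; and second, confirming that the composite $\RR_m \QQ_m$ inherits locality, since its coefficients are local functionals of $f$ composed with locally supported molecules. Both reduce to recognizing the earlier theorems as operator identities and invoking the spline regularity of $\QQ_m f$, after which the four conditions follow by the bookkeeping above.
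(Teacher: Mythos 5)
Your proposal is correct and follows essentially the same route as the paper: polynomial preservation via Theorem \ref{thm_quasi_knots_half} and linearity, locality via the compact supports \eqref{eq_M_support_knots_half} and \eqref{eq_L_support_knots_half}, and the interpolation and Hermite conditions via the telescoping cancellation, which is exactly what the paper encodes in its observation that $({\RR}_m({\QQ}_m f))^{(n)}(a) = ({\QQ}_m f)^{(n)}(a)$ and likewise at $b$ before invoking Theorem \ref{thm_local_interpolant_knots_half}. Your explicit remarks that Theorem \ref{thm_local_interpolant_knots_half} is an operator identity applicable to $g = f - {\QQ}_m f$, and that ${\QQ}_m f$ is a spline with the requisite one-sided derivatives, merely spell out what the paper's terser proof uses implicitly.
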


\begin{proof}
First, from the support properties (\ref{eq_M_support_knots_half}) and (\ref{eq_L_support_knots_half}), it is clear that ${\PP}_m$ is local. Second, the polynomial preservation property (\ref{eq_poly_preserve}) follows easily from Theorem \ref{thm_quasi_knots_half}. The interpolation property (\ref{eq_interpolatory_cond}) follows directly from Theorem \ref{thm_local_interpolant_knots_half}, together with (\ref{eq_RR_knots_half}), (\ref{eq_L_interpolates_knots_half}) and (\ref{eq_b_cond_knots_half}). Lastly, we observe that
\begin{gather*}
({\RR}_{m}({\QQ}_{m}f))^{(n)}(a) = ({\QQ}_{m}f)^{(n)}(a); \quad n=1,\ldots,m-1; \\
({\RR}_{m}({\QQ}_{m}f))^{(n)}(b) = ({\QQ}_{m}f)^{(n)}(b), \quad n=1,\ldots,m-1,
\end{gather*}
from the construction of ${\RR}_{m}$ and the spline molecules $L_{m,i}$ in Definition \ref{defn_local_interpolant_knots_half}. The result then follows using Theorem \ref{thm_local_interpolant_knots_half}.
\end{proof}

\section{Approximation order}
\label{sec_blending_approx_order}

Lastly, we provide an error analysis of the blending interpolation operators ${\PP}_m$. From the definition of ${\PP}_m$ in (\ref{eq_PP_knots_half}) it is clear that, in order to bound the error of spline interpolation of these operators, we need upper bounds on the spline molecules $M_{m,i}, \ L_{m,i}$ and $M_{m,i}^{(n)}$.

\subsection{Upper bounds on $M_{m,i}$}
Our first task is to find upper bounds on the spline coefficients $a_{m,i,j}$ in Definition \ref{defn_quasi-interpolant_knots_half}. To this end, we start by noting that, for an integer $m\geq3$ and a sequence of real numbers $\left\{x_1,\ldots,x_{m-1}\right\}$,
\begin{equation}
\left\{
\begin{aligned}
& \sum_{i=1}^{m-1} \sigma^{\ell}(x_1,\ldots,x_{i-1},x_{i+1},\ldots,x_{m-1}) = (m-1-\ell)\sigma^{\ell}(x_1,\ldots,x_{m-1});\\
& \sum_{i=1}^{m-1} x_i\sigma^{\ell}(x_1,\ldots,x_{i-1},x_{i+1},\ldots,x_{m-1}) = (\ell+1)\sigma^{\ell+1}(x_1,\ldots,x_{m-1}),
\end{aligned}
\right.
\label{eq_symm_poly_1_2}
\end{equation}
for $\ell=0,\ldots,m-2,$ and
\begin{equation}
x_{m-1}\sigma^{m-2-\ell}(x_1,\ldots,x_{m-2}) + \sigma^{m-1-\ell}(x_1,\ldots,x_{m-2}) = \sigma^{m-1-\ell}(x_1,\ldots,x_{m-1}),
\label{eq_symm_poly_3}
\end{equation}
for $\ell=0,\ldots,m-1$, all of which follow directly from the definition of the symmetric polynomials in (\ref{eq_sigma}). We will rely on the following lemma, which originally appeared in \cite[Lemma~2.1]{chuistudents}. It is given here with a modified proof.

\begin{lemma}
For an integer $m\geq 3$, let $\left\{ x_1,\ldots, x_{m-1}\right\}$ and $\left\{y_1,\ldots,y_{m-1}\right\}$ denote two sequences of real numbers. Then
\begin{multline}
\sum_{\ell=0}^{m-1} (-1)^{\ell} \frac{\sigma^{m-1-\ell}(x_1,\ldots,x_{m-1})}{\binom{m-1}{\ell}} \sigma^{\ell}(y_1,\ldots,y_{m-1}) \\ = \frac{1}{(m-1)!} \left[ \sum_{1\leq t_1,\ldots,t_{m-1}\leq m-1}\prod_{k=1}^{m-1} \left(x_{t_k}-y_k\right)\right].
\label{eq_symm_poly_4}
\end{multline}
\label{lemma_symm_poly}
\end{lemma}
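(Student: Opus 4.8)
The plan is to read the right-hand side as a sum over permutations: with $n := m-1$, the tuple $(t_1,\dots,t_{m-1})$ runs over the $(m-1)!$ orderings of $\{1,\dots,m-1\}$, so that the bracketed quantity is the permanent of the matrix $(x_i-y_j)$ and the prefactor $1/(m-1)!$ turns it into an average over permutations. My goal is to expand this average into monomials of the shape $\sigma^{s}(x)\,\sigma^{n-s}(y)$ and match coefficients against the left-hand side. (A quick check at $m=3$ confirms that it is the permutation reading, and not the all-tuples reading, that reproduces the left-hand side, so I would first fix this convention.)

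First I would expand each factor by choosing, independently in each of the $n$ positions $k$, either the term $x_{t_k}$ or the term $-y_k$. Indexing the positions that keep the $x$-term by a subset $S\subseteq\{1,\dots,n\}$, this gives
\[
\prod_{k=1}^{n}\bigl(x_{t_k}-y_k\bigr)=\sum_{S\subseteq\{1,\dots,n\}}\Bigl(\prod_{k\in S}x_{t_k}\Bigr)\Bigl(\prod_{k\notin S}(-y_k)\Bigr).
\]
Summing over all permutations $(t_1,\dots,t_n)$ and interchanging the two sums, the factor $\prod_{k\notin S}(-y_k)=(-1)^{n-|S|}\prod_{k\notin S}y_k$ is independent of the permutation and can be pulled outside.

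The heart of the argument is the inner permutation sum $\sum\prod_{k\in S}x_{t_k}$ for a fixed $S$ with $|S|=s$. Here I would observe that, as the permutation varies, the image set $\{t_k:k\in S\}$ ranges over every $s$-element subset $T$ of $\{1,\dots,n\}$, and that each such $T$ is realized by exactly $s!\,(n-s)!$ permutations (choose a bijection $S\to T$ and a bijection of the complements). Since $\prod_{k\in S}x_{t_k}=\prod_{j\in T}x_j$ depends only on $T$, this yields $\sum\prod_{k\in S}x_{t_k}=s!\,(n-s)!\,\sigma^{s}(x_1,\dots,x_n)$. Summing the remaining factor over the $\binom{n}{s}$ subsets of size $s$ produces $\sum_{|S|=s}\prod_{k\notin S}y_k=\sigma^{n-s}(y_1,\dots,y_n)$, and after dividing by $n!$ the combinatorial constant collapses to $\frac{s!\,(n-s)!}{n!}=\binom{n}{s}^{-1}$. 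Reindexing by $\ell:=n-s$ then turns the expression into $\sum_{\ell=0}^{n}(-1)^{\ell}\binom{n}{\ell}^{-1}\sigma^{n-\ell}(x)\,\sigma^{\ell}(y)$, which is exactly the left-hand side.

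I expect the delicate points to be bookkeeping rather than conceptual: getting the multiplicity $s!\,(n-s)!$ right (equivalently, justifying that $\{t_k:k\in S\}$ is uniformly distributed over $s$-subsets), keeping the sign $(-1)^{n-s}$ and the reindexing $\ell=n-s$ consistent, and stating unambiguously that the outer sum is over permutations. As a fallback I would keep in mind an induction on $m$ that peels off $x_{m-1}$ and $y_{m-1}$ and uses the symmetric-function recursions (\ref{eq_symm_poly_1_2})--(\ref{eq_symm_poly_3}); that route avoids the permanent interpretation but forces one to track how the factors $\binom{m-1}{\ell}^{-1}$ transform under $m\mapsto m-1$, which is messier than the direct combinatorial expansion above.
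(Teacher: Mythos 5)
Your proof is correct, but it takes a genuinely different route from the paper's. The paper proves Lemma \ref{lemma_symm_poly} by induction on $m$: it splits the permutation sum according to the unique position $k$ with $t_k=m-1$, pulls out the factor $(x_{m-1}-y_k)$, applies the induction hypothesis to each of the $m-1$ resulting $(m-2)$-variable sums, and then recombines everything through the symmetric-function recursions (\ref{eq_symm_poly_1_2}) and (\ref{eq_symm_poly_3}) — in other words, your ``fallback'' sketch in the last paragraph is precisely the paper's argument, including the bookkeeping of how $\binom{m-2}{\ell}^{-1}$ turns into $\binom{m-1}{\ell}^{-1}$ that you anticipated would be messy. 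Your main argument instead expands the permanent directly: writing $n=m-1$, a binomial expansion over subsets $S$ of positions, the orbit count $s!\,(n-s)!$ for permutations $\pi$ with $\pi(S)=T$ (which collapses the inner sum to $s!\,(n-s)!\,\sigma^{s}(x)$), and complement-summation giving $\sigma^{n-s}(y)$; the ratio $s!\,(n-s)!/n!=\binom{n}{s}^{-1}$ then produces the reciprocal binomial coefficients, and the reindexing $\ell=n-s$ with $\binom{n}{n-\ell}=\binom{n}{\ell}$ closes the argument — all of these steps check out. One point you raise deserves emphasis: you are right that the identity holds only under the permutation (distinct-indices) reading of $\sum_{1\leq t_1,\ldots,t_{m-1}\leq m-1}$, and your $m=3$ check settles it; the paper never states this convention, but its own first step — decomposing the sum into exactly $m-1$ subsums, one for each $j$ with $t_j=m-1$ and the remaining indices confined to $\{1,\ldots,m-2\}$ — is valid only under that reading, so your convention fix agrees with the paper's implicit usage. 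As for what each approach buys: yours is self-contained, avoids induction entirely, and makes visible \emph{why} the reciprocal binomial coefficients appear (uniform distribution of $\pi(S)$ over $s$-subsets), whereas the paper's induction stays within the elementary identities (\ref{eq_symm_poly_1_2})--(\ref{eq_symm_poly_3}) it has already set up and never needs the permanent interpretation.
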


\begin{proof}
Our proof is by induction on $m$. It can be verified directly, using the definition (\ref{eq_sigma}) of the symmetric polynomials, that the result holds for $m=3$. We now assume the result holds for an integer $m-1$ and proceed to prove (\ref{eq_symm_poly_4}). Using the induction hypothesis, (\ref{eq_symm_poly_1_2}) and (\ref{eq_symm_poly_3}), we have that
\begin{align*}
& \frac{1}{(m-1)!} \left[ \sum_{1\leq t_1,\ldots,t_{m-1}\leq m-1}\prod_{k=1}^{m-1} \left(x_{t_k}-y_k\right)\right] \\
& = \frac{1}{(m-1)!} \left[ \sum_{\substack{1\leq t_2,\ldots,t_{m-1} \leq m-2 \\ t_1=m-1}}\prod_{k=1}^{m-1} \left(x_{t_k}-y_k\right) + \sum_{\substack{1\leq t_1,t_3\ldots,t_{m-1} \leq m-2 \\ t_2=m-1}}\prod_{k=1}^{m-1} \left(x_{t_k}-y_k\right)\right. \\
& \pushright{\left. + \cdots + \sum_{\substack{1\leq t_1,\ldots,t_{m-2} \leq m-2 \\ t_{m-1}=m-1}}\prod_{k=1}^{m-1} \left(x_{t_k}-y_k\right) \right]} \\
& = \frac{1}{(m-1)!} \left[ \left(x_{m-1}-y_1\right)\sum_{1\leq t_2,\ldots,t_{m-1} \leq m-2}\prod_{\substack{k=2}}^{m-1} \left(x_{t_k}-y_k\right) \right. \\
& \quad \quad \quad \quad \quad \quad \quad \quad { \left. + \left(x_{m-1}-y_2\right)\sum_{1\leq t_1,t_3\ldots,t_{m-1} \leq m-2}\prod_{\substack{k=1 \\ k\neq 2}}^{m-1} \left(x_{t_k}-y_k\right)\right.} \\
& \pushright{\left. + \cdots + \left(x_{m-1}-y_{m-1}\right)\sum_{1\leq t_1,\ldots,t_{m-2} \leq m-2}\prod_{\substack{k=1}}^{m-2} \left(x_{t_k}-y_k\right) \right]} \\
& = \frac{1}{m-1} \left[ \left(x_{m-1}-y_1\right) \sum_{\ell=0}^{m-2}(-1)^{\ell}\frac{\sigma^{m-2-\ell}(x_1,\ldots,x_{m-2})}{\binom{m-2}{\ell}} \sigma^{\ell}(y_2,\ldots,y_{m-1}) \right. \\
&  \quad \quad \quad \quad { \left. + \left(x_{m-1}-y_2\right)\sum_{\ell=0}^{m-2}(-1)^{\ell}\frac{\sigma^{m-2-\ell}(x_1,\ldots,x_{m-2})}{\binom{m-2}{\ell}} \sigma^{\ell}(y_1,y_3,\ldots,y_{m-1}) \right.} \\
& \pushright{\left. + \cdots + \left(x_{m-1}-y_{m-1}\right)\sum_{\ell=0}^{m-2}(-1)^{\ell}\frac{\sigma^{m-2-\ell}(x_1,\ldots,x_{m-2})}{\binom{m-2}{\ell}} \sigma^{\ell}(y_1,\ldots,y_{m-2}) \right]} \\
& = \frac{1}{m-1} \left[ \sum_{\ell=0}^{m-2}(-1)^{\ell}\frac{x_{m-1}\sigma^{m-2-\ell}(x_1,\ldots,x_{m-2})}{\binom{m-2}{\ell}} \times \right. \\
& \pushright{\left. \left( \sigma^{\ell}(y_2,\ldots,y_{m-1}) + \sigma^{\ell}(y_1,y_3,\ldots,y_{m-1}) + \cdots + \sigma^{\ell}(y_1,\ldots,y_{m-2}) \right) \vphantom{\sum_{\ell=0}^{m-2}} \right.} \\
& \quad \quad \quad \quad \left. - \sum_{\ell=0}^{m-2}(-1)^{\ell}\frac{\sigma^{m-2-\ell}(x_1,\ldots,x_{m-2})}{\binom{m-2}{\ell}} \vphantom{\sum_{\ell=0}^{m-2}} \times \right. \\
& \pushright{ \left. \left( y_1\sigma^{\ell}(y_2,\ldots,y_{m-1}) + y_2\sigma^{\ell}(y_1,y_3,\ldots,y_{m-1}) + \cdots + y_{m-1}\sigma^{\ell}(y_1,\ldots,y_{m-2}) \right) \vphantom{\sum_{\ell=0}^{m-2}}
\right]}\\
& = \frac{1}{m-1} \left[ \sum_{\ell=0}^{m-1}(-1)^{\ell}\frac{x_{m-1}\sigma^{m-2-\ell}(x_1,\ldots,x_{m-2})}{\binom{m-2}{\ell}} (m-1-\ell) \sigma^{\ell}(y_1,\ldots,y_{m-1}) \right. \\
& \pushright{ \left. + \sum_{\ell=-1}^{m-2}(-1)^{\ell+1}\frac{\sigma^{m-2-\ell}(x_1,\ldots,x_{m-2})}{\binom{m-2}{\ell}} (\ell+1) \sigma^{\ell+1}(y_1,\ldots,y_{m-1}) \right] }\\
& = \sum_{\ell=0}^{m-1}(-1)^{\ell}\frac{x_{m-1}\sigma^{m-2-\ell}(x_1,\ldots,x_{m-2})}{\binom{m-1}{\ell}} \sigma^{\ell}(y_1,\ldots,y_{m-1}) \\
& \pushright{ + \sum_{\ell=0}^{m-1}(-1)^{\ell}\frac{\sigma^{m-1-\ell}(x_1,\ldots,x_{m-2})}{\binom{m-1}{\ell}} \sigma^{\ell}(y_1,\ldots,y_{m-1}) }\\
&= \sum_{\ell=0}^{m-1} (-1)^{\ell} \frac{\sigma^{m-1-\ell}(x_1,\ldots,x_{m-1})}{\binom{m-1}{\ell}} \sigma^{\ell}(y_1,\ldots,y_{m-1}),
\end{align*}
completing our inductive proof of (\ref{eq_symm_poly_4}).
\end{proof}

Next, we recall the standard formula for the expansion of a linear factorization of a monomial in terms of the symmetric polynomials: for a sequence of real numbers $\left\{ t_1,\ldots, t_n \right\}$ and some $r\in\R$ and $n\in\N$,
\begin{equation}
\prod_{j=1}^{n} \left(r - t_j \right) = \sum_{j=0}^{n} (-1)^{j} r^{n-j}\sigma^{j}(t_1,\ldots,t_n).
\label{eq_symm_poly_5}
\end{equation}

The following lemma originally appeared in \cite[Theorem~2.2]{chuistudents}, where the result was proved only for an unbounded interval. Our lemma below is a non-trivial extension that include upper bounds on the spline coefficients near the boundaries $x=a$ and $x=b$.

\begin{lemma}
For an integer $m\geq3$, let ${\xxx}$ and ${\yyy}$ be the sequences defined in (\ref{eq_xxx}) and (\ref{eq_yyy_knots_half}), respectively. Suppose that
\begin{equation}
\gamma := \sup_{n=0,\ldots,N+m-1} |x_n - y_{n-m+1}|; \quad
\delta := \min \left\{1, \inf_{n=0,\ldots,N-1} |y_{n+1} - y_n| \right\},
\label{eq_gamma_delta_H}
\end{equation}
with the definition that $y_{-m+1} = \cdots = y_0$. Then
\begin{equation}
|a_{m,i,j}| \leq \frac{1}{(m-2)!}\left(\frac{\gamma}{\delta}\right)^{m-1},
\label{eq_a_H_upper_bound}
\end{equation}
for $i=-m+1,\ldots,N+m-1, \ j=0,\ldots,m-1$.
\label{lemma_upper_bound_a}
\end{lemma}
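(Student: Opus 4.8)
The plan is to reduce everything to the interior formula (\ref{eq_a_int_half}) and convert the quotient of Vandermonde determinants into a product form on which crude size estimates can be run. First I would fix an interior pair $(i,j)$ and abbreviate the relevant window of sampling points by $z_1<\cdots<z_m$, where $z_s=y_{i+j-m+s}$, so that the column replaced by $\xi_{\xxx,m,i+j-m+1}$ in (\ref{eq_a_int_half}) is the one belonging to $z_{m-j}=y_i$. By Cramer's rule, $a_{m,i,j}$ is exactly the $(m-j)$-th entry of the solution $c$ of the Vandermonde system $Vc=\xi_{\xxx,m,i+j-m+1}$, whose rows are the powers $0,\dots,m-1$. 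Since the rows of $V^{-1}$ are the coefficient vectors of the Lagrange fundamental polynomials $L_k(w)=\prod_{q\neq k}\frac{w-z_q}{z_k-z_q}$ for the nodes $z_1,\dots,z_m$, this yields $a_{m,i,j}=\sum_{n=0}^{m-1}\alpha_n\,\xi^n_{\xxx}(i+j-m+1)$, where $\alpha_n$ is the coefficient of $w^n$ in $L_{m-j}$. Expanding the numerator of $L_{m-j}$ by means of (\ref{eq_symm_poly_5}), I would arrive at
\[
a_{m,i,j}=\frac{1}{\prod_{q\neq m-j}(z_{m-j}-z_q)}\sum_{n=0}^{m-1}(-1)^{m-1-n}\,\sigma^{m-1-n}\big(\{z_q\}_{q\neq m-j}\big)\,\xi^n_{\xxx}(i+j-m+1).
\]

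The next step is to recognise the inner sum as the left-hand side of (\ref{eq_symm_poly_4}). Recalling from (\ref{eq_xi2}) that $\xi^n_{\xxx}(i+j-m+1)=\sigma^n(x_{i+j-m+2},\dots,x_{i+j})/\binom{m-1}{n}$, the index substitution $n\mapsto m-1-n$ matches the summand verbatim, with the roles of $x_1,\dots,x_{m-1}$ in Lemma \ref{lemma_symm_poly} played by the knots $x_{i+j-m+2},\dots,x_{i+j}$ and those of $y_1,\dots,y_{m-1}$ by the punctured window $\{z_q\}_{q\neq m-j}$. Applying Lemma \ref{lemma_symm_poly} then rewrites the numerator as a single normalised sum of products, so that, writing $\hat z_1,\dots,\hat z_{m-1}$ for the enumeration of $\{z_q:q\neq m-j\}$,
\[
a_{m,i,j}=\frac{1}{(m-1)!\,\prod_{q\neq m-j}(z_{m-j}-z_q)}\sum_{\pi\in S_{m-1}}\ \prod_{r=1}^{m-1}\big(x_{i+j-m+1+\pi(r)}-\hat z_r\big).
\]
Equivalently, $a_{m,i,j}$ is the blossom of $L_{m-j}$ evaluated at the knot window, which under the midpoint rule (\ref{eq_xxx_half}) consists exactly of the consecutive midpoints $\tfrac12(z_r+z_{r+1})$.

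It remains to estimate this product form, and here lies the main difficulty. Each numerator factor is a difference between one of the knots $x_{i+j-m+2},\dots,x_{i+j}$ and one of the windowed sampling points; by the midpoint interlacing and the definition of $\gamma$ in (\ref{eq_gamma_delta_H}) (which captures precisely the largest admissible index gap $m-1$), every such factor is at most $\gamma$, while consecutive gaps of the $z_q$ are at least $\delta$, so that $|z_{m-j}-z_q|\ge\delta\,|m-j-q|$ and $\prod_{q\neq m-j}|z_{m-j}-z_q|\ge\delta^{m-1}(m-j-1)!\,j!$. A term-by-term triangle inequality, however, only gives $|a_{m,i,j}|\le\frac{1}{(m-j-1)!\,j!}\left(\frac{\gamma}{\delta}\right)^{m-1}$, which for a central choice of $m-j$ is \emph{weaker} than (\ref{eq_a_H_upper_bound}); the surplus must be recovered by pairing each numerator factor with a denominator factor spanning a comparable set of gaps, so that products straddling large sampling gaps cancel, and by controlling the sum over $S_{m-1}$ so that the combinatorial weight collapses to exactly $1/(m-2)!$. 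I expect this sharp constant tracking, rather than the algebraic reduction above, to be the crux of the proof.

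Finally, the boundary coefficients (\ref{eq_a_lb1_half})--(\ref{eq_a_rb2_half}) would be handled in the same manner, the only change being that the ordinary Vandermonde determinants are replaced by the confluent analogues (\ref{eq_confl_vandermonde}), which arise as the coalescence limit of the $z_q$ near $y_0$ (respectively near $y_N$). Interpreting the confluent columns as derivatives, the quotient is again a coefficient of a Hermite--Lagrange fundamental polynomial, to which (\ref{eq_symm_poly_5}) and Lemma \ref{lemma_symm_poly} apply, and the estimate of the preceding paragraph passes to the limit. This would establish (\ref{eq_a_H_upper_bound}) uniformly for $i=-m+1,\dots,N+m-1$ and $j=0,\dots,m-1$.
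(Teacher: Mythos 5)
Your algebraic reduction is sound, and it is in fact the same skeleton the paper uses: Cramer's rule, the expansion (\ref{eq_symm_poly_5}), and Lemma \ref{lemma_symm_poly} to convert the replaced-column determinant into $\frac{1}{(m-1)!}\sum_{\pi}\prod_k(x_{t_k}-y_k)$ (and you correctly read the sum in (\ref{eq_symm_poly_4}) as running over distinct indices, which is what the paper's inductive proof requires). But your proposal does not prove the lemma: it stops exactly at the inequality (\ref{eq_a_H_upper_bound}) that is to be shown. Your own estimate gives $|a_{m,i,j}|\le\frac{1}{(m-j-1)!\,j!}\left(\frac{\gamma}{\delta}\right)^{m-1}$, which, as you concede, is weaker than the claimed bound for central values of $j$, and you replace the missing step with the expectation that ``sharp constant tracking'' and cancellations will recover the constant $\frac{1}{(m-2)!}$ --- an idea you neither formulate nor verify. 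A proof attempt whose final paragraph conjectures the crux has a genuine gap. Note also that the paper does not re-derive the interior case at all: it imports it wholesale from \cite[Theorem~2.2]{chuistudents}, and the entire substance of its proof is the boundary coefficients (\ref{eq_a_lb1_half})--(\ref{eq_a_rb2_half}), which the paper itself flags as the non-trivial extension.

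For those boundary coefficients your plan is thinner than it looks. The paper computes directly with the explicit confluent Vandermonde factorization (\ref{eq_a_proof_3}) and the stacked window $y_{-m+1}=\cdots=y_0$: after cancelling common factors between numerator and denominator, what remains in the denominator is no longer a product of $m-1$ pairwise differences of distinct sampling points, but contains repeated factors such as $|y_i-y_0|^{m-j-i}$ (and in the case $i=0$ only the $j$ factors $\prod_{s=1}^{j}|y_s-y_0|$ survive), so the gap-counting lower bound $\prod_{q\neq m-j}|z_{m-j}-z_q|\ge\delta^{m-1}(m-j-1)!\,j!$ on which your interior estimate rests does not carry over verbatim; the combinatorics of the estimate genuinely changes at the boundary. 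Your ``coalescence limit'' shortcut would at best transport an interior bound you never established, and it silently assumes uniformity of that bound as nodes coalesce --- precisely the regime in which the repeated factors degrade the denominator. To repair the proposal, either invoke \cite[Theorem~2.2]{chuistudents} for the interior range (as the paper does) and then redo the bookkeeping separately for each boundary family starting from (\ref{eq_a_proof_3}), or supply the refined factor-by-factor estimate --- bounding each $|x_{t_k}-y_k|$ by the number of $\delta$-gaps it spans rather than uniformly by $\gamma$ --- that actually produces the constant $\frac{1}{(m-2)!}$ in (\ref{eq_a_H_upper_bound}).
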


\begin{proof}
First, if $i\in\left\{m-1,\ldots,N+1-m\right\}$, $j\in\left\{0,\ldots,m-1\right\}$ or $i\in\left\{0,\ldots,m-2\right\}$, $j\in\left\{m-1-i,\ldots,m-1\right\},$ or $i\in\left\{N-m+2,\ldots,N\right\}$, $j\in\left\{0,\ldots,N-i\right\}$, the result follows as in \cite[Theorem~2.2]{chuistudents}. We proceed to prove (\ref{eq_a_H_upper_bound}) for the spline coefficients near the left hand side endpoint; the proof for the spline coefficients near the right hand side boundary is similar. To this end, let $i\in\left\{1,\ldots,m-2\right\}$ and $j\in\left\{0,\ldots,m-2-i\right\}$ be fixed. An explicit formulation of the confluent Vandermonde determinant $D_{C}(y_0,y_0^{(1)},\ldots,y_0^{(m-1-j-i)}, y_1,\ldots,y_{i+j})$ is given in \cite{horn37topics,papanicolaou2014some}, namely
\begin{equation}
D_{C}(y_0,y_0^{(1)},\ldots,y_0^{(m-1-j-i)}, y_1,\ldots,y_{i+j}) = \prod_{\ell=1}^{i+j}(y_{\ell}-y_0)^{m-j-i} \prod_{1\leq k<\ell \leq i+j} (y_{\ell}-y_k).
\label{eq_a_proof_3}
\end{equation}
With the definition ${\mathbf{r}}=[1,r,r^2,\ldots,r^{m-1}]^{T}$ for some $r\in\R$ and with $y_{-m+1} = \cdots = y_0$, we now have (following the notation introduced in Section \ref{sec_quasi})
\begin{align*}
& D_{C}(y_0,y_0^{(1)},\ldots,y_0^{(m-1-j-i)}, y_1,\ldots,y_{i-1},{\mathbf{r}}, y_{i+1},\ldots,y_{i+j}) \\
&= \prod_{\substack{\ell=1\\\ell\neq i}}^{i+j}(y_{\ell}-y_0)^{m-j-i} \prod_{\substack{1\leq k<\ell \leq i+j\\k,\ell\neq i}} (y_{\ell}-y_k){ \left(r-y_0\right)^{m-j-i} \prod_{k=1}^{i-1}\left(r-y_k\right) \prod_{k=i+1}^{i+j} \left(y_k - r \right) } \nonumber \\
& = \prod_{\substack{\ell=1\\\ell\neq i}}^{i+j}(y_{\ell}-y_0)^{m-j-i} \prod_{\substack{1\leq k<\ell \leq i+j\\k,\ell\neq i}} (y_{\ell}-y_k) \prod_{\substack{k=i+j-m+1\\k\neq i}}^{i+j}(-1)^{j}\left(r-y_k\right) \\
& = (-1)^{j}\prod_{\substack{\ell=1\\\ell\neq i}}^{i+j}(y_{\ell}-y_0)^{m-j-i} \prod_{\substack{1\leq k<\ell \leq i+j\\k,\ell\neq i}} (y_{\ell}-y_k) { \prod_{\substack{k=0\\k\neq m-1-j}}^{m-1}\left(r-y_{k+i+j-m+1}\right) }\\
& = (-1)^{j}\prod_{\substack{\ell=1\\\ell\neq i}}^{i+j}(y_{\ell}-y_0)^{m-j-i} \prod_{\substack{1\leq k<\ell \leq i+j\\k,\ell\neq i}} (y_{\ell}-y_k) \times  \\
& \pushright{  \sum_{n=0}^{m-1} (-1)^{n} r^{m-1-n}\sigma^{n}(y_{i+j-m+1},\ldots,y_{i-1},y_{i+1},\ldots,y_{i+j}) }
\end{align*}
from (\ref{eq_symm_poly_5}), so that, from the definition (\ref{eq_xi1})-(\ref{eq_xi2}) and (\ref{eq_symm_poly_4}) in Lemma \ref{lemma_symm_poly},
\begin{align}
& D_{C}(y_0,y_0^{(1)},\ldots,y_0^{(m-1-j-i)}, y_1,\ldots,y_{i-1},\xi_{{\xxx},m,i+j-m+1}, y_{i+1},\ldots,y_{i+j}) \nonumber \\
& = (-1)^{j}\prod_{\substack{\ell=1\\\ell\neq i}}^{i+j}(y_{\ell}-y_0)^{m-j-i} \prod_{\substack{1\leq k<\ell \leq i+j\\k,\ell\neq i}} (y_{\ell}-y_k) \times \nonumber \\
& \pushright{  \sum_{n=0}^{m-1} (-1)^{n} \xi^{m-1-n}_{\xxx}(i+j-m+1)\sigma^{n}(y_{i+j-m+1},\ldots,y_{i-1},y_{i+1},\ldots,y_{i+j}) } \nonumber \\
& = (-1)^{j}\prod_{\substack{\ell=1\\\ell\neq i}}^{i+j}(y_{\ell}-y_0)^{m-j-i} \prod_{\substack{1\leq k<\ell \leq i+j\\k,\ell\neq i}} (y_{\ell}-y_k) \times \nonumber \\
& \pushright{ \sum_{n=0}^{m-1}(-1)^{n}\frac{\sigma^{m-1-n}(x_{i+j-m+2},\ldots,x_{i+j})}{\binom{m-1}{n}}\sigma^{n}(y_{i+j-m+1},\ldots,y_{i-1},y_{i+1},\ldots,y_{i+j}) } \nonumber\\
& = (-1)^{j}\prod_{\substack{\ell=1\\\ell\neq i}}^{i+j}(y_{\ell}-y_0)^{m-j-i} \prod_{\substack{1\leq k<\ell \leq i+j\\k,\ell\neq i}} (y_{\ell}-y_k) \times \nonumber \\
& \pushright{  \frac{1}{(m-1)!} \sum_{i+j-m+2 \leq t_{i+j-m+1},\ldots,t_{i-1},t_{i+1},\ldots,t_{i+j}\leq i+j} \prod_{\substack{n=i+j-m+1\\n\neq i}}^{i+j} \left(x_{t_n}-y_n\right) }.\nonumber\\
\label{eq_a_proof_4}
\end{align}
It therefore follows from the definition of $a_{m,i,j}$ in (\ref{eq_a_lb1_half}), as well as (\ref{eq_a_proof_4}) and the definitions in (\ref{eq_gamma_delta_H}), that
\begin{align*}
|a_{m,i,j}| &\leq \frac{1}{(m-1)!}|y_{i}-y_0|^{-(m-j-i)}\prod_{\substack{k=1\\k\neq i}}^{i+j} |y_{i}-y_k|^{-1} \times \\
& {\quad \quad  \sum_{i+j-m+2 \leq t_{i+j-m+1},\ldots,t_{i-1},t_{i+1},\ldots,t_{i+j}\leq i+j} \prod_{\substack{n=i+j-m+1\\n\neq i}}^{i+j} |x_{t_n}-y_n|}  \leq \frac{1}{(m-2)!}\left(\frac{\gamma}{\delta}\right)^{m-1}.
\end{align*}

Lastly, let $i=0, \ j\in\left\{0,\ldots,m-2\right\}$ or $i=0, \ \ell\in\left\{1,\ldots,m-1\right\}, \ j\in\left\{0,\ldots,m-1-\ell \right\}$ be fixed. Then, with ${\mathbf{r}}=[1,r,r^2,\ldots,r^{m-1}]^{T}$ for some $r\in\R$ and $y_{-m+1} = \cdots = y_0$, we have, from (\ref{eq_a_proof_3}) with $i=0$,
\begin{align*}
& D_{C}(y_0,y_0^{(1)},\ldots,y_0^{(\ell-1)}, {\mathbf{r}}, y_0^{(\ell+1)}, \ldots, y_0^{(m-1-j)}, y_1,\ldots,y_{j})  \\
&= \prod_{s=1}^{j}(y_{s}-y_0)^{m-j-1} \prod_{1\leq k<s \leq j} (y_{s}-y_k) { \left(r-y_0\right)^{m-j-1} \prod_{k=1}^{j} \left(y_k - r \right) } \\
& = \prod_{s=1}^{j}(y_{s}-y_0)^{m-j-1} \prod_{1\leq k<s \leq j} (y_{s}-y_k) \prod_{k=j-m+2}^{j}(-1)^{j}\left(r-y_k\right)  \\
& = (-1)^{j}\prod_{s=1}^{j}(y_{s}-y_0)^{m-j-1} \prod_{1\leq k<s \leq j} (y_{s}-y_k) { \sum_{n=0}^{m-1} (-1)^{n} r^{m-1-n}\sigma^{n}(y_{j-m+2},\ldots,y_{j}), }
\end{align*}
so that, from (\ref{eq_xi1})-(\ref{eq_xi2}), and (\ref{eq_symm_poly_5}) and (\ref{eq_symm_poly_4}) in Lemma \ref{lemma_symm_poly},
\begin{align}
& D_{C}(y_0,y_0^{(1)},\ldots,y_0^{(\ell-1)}, \xi_{{\xxx},m,j-m+1}, y_0^{(\ell+1)}, \ldots, y_0^{(m-1-j)}, y_1,\ldots,y_{j}) \nonumber \\
& = (-1)^{j}\prod_{s=1}^{j}(y_{s}-y_0)^{m-j-1} \prod_{1\leq k<s \leq j} (y_{s}-y_k) { \sum_{n=0}^{m-1} (-1)^{n} \xi^{m-1-n}_{\xxx}(j-m+1)\sigma^{n}(y_{j-m+2},\ldots,y_{j}) } \nonumber \\
& = (-1)^{j}\prod_{s=1}^{j}(y_{s}-y_0)^{m-j-1} \prod_{1\leq k<s \leq j} (y_{s}-y_k) \times \nonumber \\
& \pushright{ \sum_{n=0}^{m-1}(-1)^{n}\frac{\sigma^{m-1-n}(x_{j-m+2},\ldots,x_{j})}{\binom{m-1}{n}}\sigma^{n}(y_{j-m+2},\ldots,y_{j}) } \nonumber\\
& = (-1)^{j}\prod_{s=1}^{j}(y_{s}-y_0)^{m-j-1} \prod_{1\leq k<s \leq j} (y_{s}-y_k) {  \frac{1}{(m-1)!} \sum_{j-m+2 \leq t_{j-m+2},\ldots,t_{j}\leq j} \prod_{n=j-m+2}^{j} \left(x_{t_n}-y_n\right) }.\nonumber\\
\label{eq_a_proof_5}
\end{align}
It therefore follows from the definition of $a_{m,i,j}$ in (\ref{eq_a_lb1_half})-(\ref{eq_a_lb2_half}), as well as (\ref{eq_a_proof_5}) and the definitions in (\ref{eq_gamma_delta_H}), that
\begin{align*}
|a_{m,i,j}| &\leq \frac{1}{(m-1)!}\prod_{s=1}^{j} |y_{s}-y_0|^{-1} \sum_{j-m+2 \leq t_{j-m+2},\ldots,t_{j}\leq j} \prod_{n=j-m+2}^{j} |x_{t_n}-y_n| \\
& \leq \frac{1}{(m-2)!}\left(\frac{\gamma}{\delta}\right)^{m-1}. \qedhere
\end{align*}
\end{proof}

From Lemma \ref{lemma_upper_bound_a}, and the fact that the B-splines $\left\{ N_{{\xxx},m,j}: \ j=-m+1,\ldots,N \right\}$ provide a partition of unity, the following upper bound on the spline molecules $M_{m,i}$ follows easily:

\begin{theorem}
For an integer $m\geq3$,
\begin{equation}
|M_{m,i}(x)| \leq \frac{1}{(m-2)!} \left( \frac{\gamma}{\delta} \right)^{m-1},
\label{eq_M_H_upper_bound}
\end{equation}
for all $i=-m+1,\ldots,N+m-1,$ and $x\in[a,b]$, where the constants $\gamma$ and $\delta$ are defined in (\ref{eq_gamma_delta_H}).
\label{thm_upper_bound_M}
\end{theorem}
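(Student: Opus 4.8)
The plan is to bound each molecule directly from its B-spline expansion in (\ref{eq_M_defn_knots_half}), combining the uniform coefficient estimate of Lemma \ref{lemma_upper_bound_a} with the nonnegativity and partition-of-unity property of the normalized B-splines. The argument will be identical for all three families of molecules (the interior molecules $M_{m,i}$, the left-boundary molecules $M_{m,-\ell}$, and the right-boundary molecules $M_{m,N+r}$), so I would treat them uniformly and write the details only for the interior case.

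First I would apply the triangle inequality to the relevant sum in (\ref{eq_M_defn_knots_half}). For an interior molecule this gives
\[
|M_{m,i}(x)| = \left| \sum_{j=0}^{m-1} a_{m,i,j} N_{\xxx,m,i+j-m+1}(x) \right| \le \sum_{j=0}^{m-1} |a_{m,i,j}| \, N_{\xxx,m,i+j-m+1}(x),
\]
where I have used the nonnegativity of each $N_{\xxx,m,j}(x)$ to drop the absolute values on the B-splines. I would then invoke Lemma \ref{lemma_upper_bound_a} to replace every $|a_{m,i,j}|$ by the common bound $\tfrac{1}{(m-2)!}(\gamma/\delta)^{m-1}$ and factor this constant out of the sum.

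It then remains only to bound the residual sum of B-splines by $1$. The key observation is that the indices appearing in any of the three molecule formulas form a subset of $\{-m+1,\ldots,N\}$, the index set of the full normalized B-spline family on $[a,b]$, which satisfies $\sum_{j=-m+1}^{N} N_{\xxx,m,j}(x) = 1$ for every $x\in[a,b]$. Since every B-spline is nonnegative, any partial sum over a subset of these indices is at most $1$. Applying this to the residual sum yields
\[
|M_{m,i}(x)| \le \frac{1}{(m-2)!}\left(\frac{\gamma}{\delta}\right)^{m-1} \sum_{j=0}^{m-1} N_{\xxx,m,i+j-m+1}(x) \le \frac{1}{(m-2)!}\left(\frac{\gamma}{\delta}\right)^{m-1},
\]
and the same estimate for the boundary molecules $M_{m,-\ell}$ and $M_{m,N+r}$ (whose B-spline indices also lie within $\{-m+1,\ldots,N\}$) completes the proof over the full range $i=-m+1,\ldots,N+m-1$.

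I do not expect any genuine obstacle here: the entire analytic difficulty has already been absorbed into Lemma \ref{lemma_upper_bound_a}, and the present statement is a one-line consequence of it. The only thing that must be checked carefully is the bookkeeping that each molecule's B-spline indices stay inside the partition-of-unity range, so that the nonnegative partial sums are controlled by $1$; this is immediate from the index ranges in (\ref{eq_M_defn_knots_half}).
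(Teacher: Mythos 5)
Your proposal is correct and follows essentially the same route as the paper's own proof: apply the triangle inequality to the expansion (\ref{eq_M_defn_knots_half}), replace each $|a_{m,i,j}|$ by the uniform bound (\ref{eq_a_H_upper_bound}) from Lemma \ref{lemma_upper_bound_a}, and control the remaining sum of B-splines by $1$ via their nonnegativity and partition-of-unity property. Your explicit bookkeeping that each molecule's B-spline indices stay within $\{-m+1,\ldots,N\}$ merely spells out what the paper compresses into its citation of the standard properties of normalized B-splines.
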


\begin{proof}
Using the spline molecule definition (\ref{eq_M_defn_knots_half}) and (\ref{eq_a_H_upper_bound}) in Lemma \ref{lemma_upper_bound_a}, and the properties of the normalized B-splines (see, for example, \cite[Theorem~6.4]{chui1997wavelets}), we have, for $i=-m+1,\ldots,N+m-1,$
\begin{equation*}
|M_{m,i}(x)| \leq \sum_{j=0}^{m-1} |a_{m,i,j} N_{{\xxx},m,i+j-m+1}(x)| \leq \frac{1}{(m-2)!}\left(\frac{\gamma}{\delta}\right)^{m-1}. \qedhere
\end{equation*}
\end{proof}

\subsection{Upper bounds on $M_{m,i}^{(n)}$}
Lemma \ref{lemma_upper_bound_a} also leads to the following upper bound on the spline molecule derivatives $M_{m,i}^{(n)}$:

\begin{theorem}
For an integer $m\geq3$, let $M_{m,i}$, $i=-m+1,\ldots,N+m-1,$ be defined by (\ref{eq_M_defn_knots_half}), respectively. Then
\begin{equation}
\left\{
\begin{aligned}
& |M_{m,i}^{(n)}(a)| \leq \frac{m}{(m-2)!}\left(\frac{\gamma}{\delta}\right)^{m-1}\left(\frac{2}{\delta}\right)^{m-1},\\
& \quad \quad \quad \quad \quad \quad \quad \quad {i=-m+1,\ldots,m-1; \ n=1,\ldots,m-1;}\\
& |M_{m,i}^{(n)}(b)| \leq \frac{m}{(m-2)!}\left(\frac{\gamma}{\delta}\right)^{m-1}\left(\frac{2}{\delta}\right)^{m-1},\\
& \pushright{i=N-m+2,\ldots,N+m-1; \ n=1,\ldots,m-1.}\\
\end{aligned}
\right.
\label{eq_M_H_der_upper_bound}
\end{equation}
\label{thm_upper_bound_M_der}
\end{theorem}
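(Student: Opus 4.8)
The plan is to differentiate the molecule expansion (\ref{eq_M_defn_knots_half}) termwise and reduce everything to two ingredients: the coefficient bound (\ref{eq_a_H_upper_bound}) of Lemma \ref{lemma_upper_bound_a}, and a pointwise bound on the endpoint derivatives of the individual B-splines. Since each molecule $M_{m,i}$ is a linear combination of at most $m$ B-splines, I would write
\[
M_{m,i}^{(n)}(a) = \sum_{j=0}^{m-1} a_{m,i,j}\,N_{\xxx,m,i+j-m+1}^{(n)}(a),
\]
so that $|M_{m,i}^{(n)}(a)| \le \left(\max_j |a_{m,i,j}|\right)\sum_j |N_{\xxx,m,i+j-m+1}^{(n)}(a)|$, the first factor already being controlled by $\tfrac{1}{(m-2)!}(\gamma/\delta)^{m-1}$ from (\ref{eq_a_H_upper_bound}). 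The whole difficulty is thus pushed onto the B-spline derivative sum, which I would aim to bound by $m\,(2/\delta)^{m-1}$.

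For the B-spline derivatives I would use de Boor's differentiation formula, which expresses the derivative of an order-$m$ spline $\sum_j c_j N_{\xxx,m,j}$ as an order-$(m-1)$ spline whose coefficients are the scaled first differences $(m-1)(c_j-c_{j-1})/(x_{j+m-1}-x_j)$, and iterate it $n \le m-1$ times. Working with this form rather than with the explicit Bernstein values of the boundary B-spline is what builds in the partition-of-unity cancellation $\sum_j N^{(n)}_{\xxx,m,j} \equiv 0$ for $n\ge 1$, which prevents the factorial growth that a crude triangle inequality would otherwise produce. The crucial geometric input is that, because of the midpoint construction (\ref{eq_xxx_half}), the smallest knot gap that can appear near $a$ is $x_1-x_0 = \tfrac12(y_1-y_0) \ge \delta/2$ (and symmetrically $x_{N+1}-x_N = \tfrac12(y_N-y_{N-1}) \ge \delta/2$ near $b$); hence every denominator arising in the iterated differentiation is $\ge \delta/2$, so each differentiation contributes a factor at most $2/\delta$. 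Using $\delta \le 1$, the accumulated factor after $n\le m-1$ steps is at most $(2/\delta)^{m-1}$.

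I would then combine the two ingredients. Since $M_{m,i}^{(n)}$ is a spline of order $m-n$, at most $m$ (indeed $m-n$) of the reduced B-splines are nonzero at $a$, supplying the leading factor $m$; multiplying by the coefficient bound $\tfrac{1}{(m-2)!}(\gamma/\delta)^{m-1}$ and the differentiation factor $(2/\delta)^{m-1}$ then yields the claimed estimate for $|M_{m,i}^{(n)}(a)|$. The bound at $b$ follows verbatim, with $a,y_0,x_0,x_1$ replaced by $b,y_N,x_{N+1},x_N$, using the symmetric support ranges in (\ref{eq_M_support_knots_half}).

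The hard part will be the bookkeeping of constants: naively iterating de Boor's formula introduces the factors $(m-1)(m-2)\cdots(m-n)$ from successive differentiations together with a factor $2$ at each stage from the crude estimate $|c_j - c_{j-1}| \le 2\max|c|$, and these alone overshoot the stated prefactor $\tfrac{m}{(m-2)!}$. To land exactly on the claimed bound one must exploit that the knot spans $x_{j+m-k}-x_j$ entering at the $k$-th stage cover several intervals (so they are bounded below by more than $\delta/2$ away from the endpoint) and, more importantly, that the first differences of the smooth coefficient sequence $\{a_{m,i,j}\}$ do not accumulate across stages, rather than bounding each difference by $2\max_j|a_{m,i,j}|$. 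Verifying that these cancellations collapse the factorial prefactor down to $\tfrac{m}{(m-2)!}$ is the delicate step; the remaining estimates are routine.
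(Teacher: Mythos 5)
Your first reduction is exactly the paper's: differentiate (\ref{eq_M_defn_knots_half}) termwise, pull out $\max_j|a_{m,i,j}|\leq\frac{1}{(m-2)!}(\gamma/\delta)^{m-1}$ from Lemma \ref{lemma_upper_bound_a}, and reduce the theorem to showing $\sum_{j}|N_{\xxx,m,i+j-m+1}^{(n)}(a)|\leq m(2/\delta)^{m-1}$. But that remaining estimate \emph{is} the theorem, and you never prove it: your final paragraph concedes that your bookkeeping overshoots by factorial factors and defers to two unverified cancellation mechanisms --- the identity $\sum_j N_{\xxx,m,j}^{(n)}\equiv 0$ for $n\geq1$, and the claim that the first differences of the coefficient sequence $\{a_{m,i,j}\}_j$ do not accumulate through the stages of de Boor's formula. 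Nothing in your sketch (or in the paper) establishes any difference/smoothness structure for $j\mapsto a_{m,i,j}$; these coefficients are ratios of (confluent) Vandermonde determinants, and Lemma \ref{lemma_upper_bound_a} controls only their size. As written, the ``delicate step'' is simply missing, so the proposal does not constitute a proof.

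The gap is also a misdiagnosis: no sign cancellation is needed, and the paper closes the estimate with precisely the crude triangle inequality you dismiss. In the B-spline derivative recursion, each differentiation contributes a factor $(m-\ell)$ divided by a knot span of the form $x_{j+m-\ell}-x_j$; keeping the span intact --- it covers $m-\ell$ knot intervals, and each interior gap satisfies $x_{i+1}-x_i=\tfrac12(y_{i+1}-y_{i-1})\geq\delta$ --- this quotient is bounded by $1/\delta$ per step, so the numerators $(m-1)(m-2)\cdots(m-n)$ you fear are absorbed deterministically by the denominators and never appear. Iterating $n$ times spreads each $N_{\xxx,m,j}$ over at most $n+1$ B-splines of order $m-n$ with weights $\binom{n}{k}$, and since $0\leq N_{\xxx,m-n,\cdot}(a)\leq 1$, each of the $m$ molecule terms contributes at most $\delta^{-n}\sum_{k=0}^{n}\binom{n}{k}=2^{n}\delta^{-n}\leq(2/\delta)^{m-1}$ (using $\delta\leq1$), which is exactly the paper's chain and lands on $m(2/\delta)^{m-1}$. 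Note in particular that the factor $2$ in the bound is the binomial sum $2^{n}$, not the boundary half-gap $x_1-x_0=\tfrac12(y_1-y_0)\geq\delta/2$ that your sketch is built around. If you replace your per-interval estimate (every gap bounded below by $\delta/2$) with this per-span estimate, your argument collapses into the paper's half-page proof; as submitted, the central inequality is asserted rather than proved.
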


\begin{proof}
Let $i\in\left\{ -m+1,\ldots,m-1\right\}$ be fixed (the proof for the right hand side boundary follows similarly). From the spline molecule definition (\ref{eq_M_defn_knots_half}), the upper bound (\ref{eq_a_H_upper_bound}), the recursive formulation of the derivative of a B-spline and the definition of $\delta$ in (\ref{eq_gamma_delta_H}), we have, for any $n\in\left\{1,\ldots,m-1\right\}$,
\begin{align*}
|M_{m,i}^{(n)}(a)| & \leq \sum_{j=0}^{m-1} |a_{m,i,j} N_{{\xxx},m,i+j-m+1}^{(n)}(a)| \\
& \leq \frac{1}{\delta^n} \frac{1}{(m-2)!} \left(\frac{\gamma}{\delta}\right)^{m-1} \sum_{j=0}^{m-1} \sum_{k=0}^{n} \binom{n}{k} N_{{\xxx},m-n,i+j-m+1+k}(a) \\
& \leq \frac{1}{\delta^n} \frac{1}{(m-2)!} \left(\frac{\gamma}{\delta}\right)^{m-1} m 2^{m-1} \leq \frac{m}{(m-2)!}\left(\frac{\gamma}{\delta}\right)^{m-1}\left(\frac{2}{\delta}\right)^{m-1}. \qedhere
\end{align*}
\end{proof}

\subsection{Upper bounds on $L_{m,i}$}
We now proceed to derive upper bounds on $\ L_{m,i}$. We will rely on the following lemma.

\begin{lemma}
For an integer $m\geq3$, define
\begin{equation}
\rho := \max \left\{ 1, |y_1-y_0|, |y_N-y_{N-1}| \right\}; \quad
\lambda := \min \left\{ 1, \inf |t_{n+1}-t_n| \right\}.
\label{eq_rho_lambda}
\end{equation}
Then, for $i,j=0,\ldots,m-1,$
\begin{equation}
\frac{m-1}{\rho^{m-1}} \leq |N_{{\ttt}_{-m+1+j},m}^{(i)}(a)|, \ |N_{{\ttt}_{N+j},m}^{(i)}(b)| \leq \frac{(m-1)!(m-1)!}{\lambda^{m-1}}.
\label{eq_bound_b_spline_der_scheme_H}
\end{equation}
\label{lemma_bind_boundary_b_splines}
\end{lemma}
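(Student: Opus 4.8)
The plan is to deduce both inequalities from the multiplicity with which the endpoints sit inside the boundary knot sequences. First I would record that, by the construction in (\ref{eq_ttt_knots_half_m_even}) and (\ref{eq_ttt_knots_half_m_odd}), the point $a$ appears in ${\ttt}_{-m+1+j}$ with multiplicity $m-j$ (the stacked copies $t_{-(m-1-j)},\dots,t_{-1}$ together with $t_0=a$), and symmetrically $b$ appears in ${\ttt}_{N+j}$ with multiplicity $m-j$. Hence, on the first knot interval abutting $a$, the normalized B-spline $N_{{\ttt}_{-m+1+j},m}$ is a single polynomial of degree $m-1$ that vanishes to order exactly $j$ at $a$, so $N_{{\ttt}_{-m+1+j},m}^{(i)}(a)=0$ for $i<j$ while its $j$-th derivative is the first nonvanishing one. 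This is the fact that makes the collocation matrix $\bigl[\,N_{{\ttt}_{-m+1+k},m}^{(n)}(a)\,\bigr]_{n,k=0}^{m-1}$ governing the coefficients $b_{m,-\ell,k}$ in (\ref{eq_L_defn_knots_half})--(\ref{eq_b_cond_knots_half}) lower triangular with nonvanishing diagonal; accordingly the lower estimate is what is needed on the leading (diagonal) derivatives, and the upper estimate on every entry.

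For the upper bound I would iterate the standard recursion for the derivative of a B-spline, which at each step lowers the order by one, produces an order factor at most $m-1$ (then $m-2$, and so on), and divides by a knot gap that is at least $\lambda$ by (\ref{eq_rho_lambda}). After $i\le m-1$ steps the derivative is a linear combination of normalized B-splines of order $m-i$, each bounded by $1$; collecting the $i$ order factors into $(m-1)!/(m-1-i)!\le (m-1)!$, the $i$ reciprocal gaps into $\lambda^{-i}\le\lambda^{-(m-1)}$ (using $\lambda\le 1$), and the combinatorial weight of the surviving terms into a further factor which, after crude estimation with $i\le m-1$, is at most $(m-1)!$, one arrives at $|N_{{\ttt}_{-m+1+j},m}^{(i)}(a)|\le (m-1)!\,(m-1)!\,\lambda^{-(m-1)}$. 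Degenerate terms arising from coincident knots are discarded under the usual convention and only improve the estimate, and the bound at $b$ is identical by the mirror symmetry of the construction.

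The substance of the lemma is the lower bound on the leading derivative $N_{{\ttt}_{-m+1+j},m}^{(j)}(a)$, and this is where I expect the main obstacle. I would evaluate it from the truncated-power / divided-difference representation (\ref{eq_b_spline}), now with the node $a$ repeated $m-j$ times, so that the divided difference becomes confluent --- exactly the situation encoded by (\ref{eq_confl_vandermonde}) --- and $N_{{\ttt}_{-m+1+j},m}^{(j)}(a)=j!\,c_j$, where $c_j$ is the first nonvanishing Taylor coefficient of the polynomial piece at $a$. The crux is to produce a closed form for $c_j$ in the presence of the clustered endpoint knots and to bound it away from zero: the only knots entering are those of ${\ttt}_{-m+1+j}$, all lying in $[t_0,t_m]\subseteq[a,y_1]$, so the relevant span is controlled above by $\rho$, and one finds $c_j$ comparable to (span)$^{-j}$, whence $|c_j|$ is bounded below by a positive multiple of $\rho^{-j}$. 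Since $\rho\ge 1$ gives $\rho^{-j}\ge\rho^{-(m-1)}$ for $j\le m-1$, this delivers the uniform lower bound of order $\rho^{-(m-1)}$. The delicate points are the confluent bookkeeping --- replacing the distinct-knot determinant identities used for the interior coefficients by their confluent analogues --- and verifying that the leading coefficient never cancels, so that the B-spline has order-$j$ contact with zero at $a$ and no higher. Once this non-degeneracy and the accompanying explicit constant are secured, the diagonal lower bound combines with the global upper bound to finish the proof, the right-endpoint case following by the symmetric argument at $b$.
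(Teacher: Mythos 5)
Your upper bound follows the paper's own route (iterate the derivative recursion, bound each knot gap below by $\lambda$, bound the order-$(m-i)$ B-splines by $1$), but the lower bound --- which you yourself call the substance of the lemma --- is not actually proved. You reduce it to the claim that the leading Taylor coefficient $c_j$ of $N_{{\ttt}_{-m+1+j},m}$ at $a$ is ``comparable to $(\mathrm{span})^{-j}$'' and bounded away from zero, and you explicitly defer the confluent bookkeeping and the verification that no cancellation occurs. As written, that is an announcement of the needed estimate, not a derivation: the confluent divided-difference route would require a closed form for the first nonvanishing coefficient of a B-spline with an endpoint knot of multiplicity $m-j$, together with a non-cancellation argument over the interior knots --- which is exactly what the lemma asserts. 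So the central step of the proposal is a genuine gap.

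The paper closes this gap with the very recursion you already used for the upper bound: after differentiating $i$ times, $N_{{\ttt}_{-m+1+j},m}^{(i)}(a)$ is a signed combination of order-$(m-i)$ B-splines evaluated at $a$, and at the stacked endpoint \emph{exactly one} of these is nonzero, with value $1$. The sum therefore collapses to a single term, so no cancellation is possible, and since all knots of ${\ttt}_{-m+1+j}$ lie in $[y_0,y_1]$, each knot-gap factor is at most $\rho$, giving $|N^{(i)}(a)|\geq (m-1)\cdots(m-i)\,\rho^{-i}\binom{i}{j}\geq (m-1)\rho^{-(m-1)}$ on the diagonal. No confluent determinants are needed. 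Two side remarks: (i) your observation that $N_{{\ttt}_{-m+1+j},m}^{(i)}(a)=0$ for $i<j$ is correct and shows the stated lower bound can only hold for $i=j$ (the paper's proof indeed produces the factor $\binom{i}{j}$, which vanishes for $j>i$, and only the diagonal case is used in the subsequent lemma on $b_{m,-\ell,k}$) --- your restriction is more accurate than the lemma's statement; (ii) in your upper bound, summing all $2^i$ terms of the expanded recursion with B-splines bounded by $1$ yields $2^i(m-1)!/\bigl(\lambda^{m-1}(m-1-i)!\bigr)$, which exceeds the stated constant $(m-1)!\,(m-1)!$ for $m=3,4$; the single-surviving-term observation fixes this too, since the lone coefficient $\binom{i}{j}\leq(m-1)!$. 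So the one-term collapse at the stacked knot is the missing idea, and it does the work in both directions.
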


\begin{proof}
We provide the proof for when $x=a$; the proof for the case where $x=b$ follows similarly. By using the recursive formula for the derivative of a B-spline and the positivity property of the normalized B-splines (see, for example, \cite[Theorem~6.4]{chui1997wavelets}), together with the definitions of the knot sequences ${\ttt}_{-m+1},\ldots,{\ttt}_0$ in (\ref{eq_ttt_knots_half_m_even}) and (\ref{eq_ttt_knots_half_m_odd}), that, for $i,j\in\left\{ 1,\ldots,m-1\right\}$,
\begin{align*}
& N_{{\ttt}_{-m+1+j},m}^{(i)}(a) \\
& = (m-1)\left[ \frac{1}{t_{j}-y_0} N_{{\ttt}_{-m+1+j},m-1,-m+1+j}^{(i-1)}(a) \right. \\
& \pushright{ \left. - \frac{1}{y_1-t_{-m+2+j}} N_{{\ttt}_{-m+1+j},m-1,-m+2+j}^{(i-1)}(a) \right] } \\
& = (m-1)(m-2) \left[  \frac{1}{(t_{j}-y_0)(t_{j-1}-y_0)}N_{{\ttt}_{-m+1+j},m-2,-m+1+j}^{(i-2)}(a) \right. \\
& \quad \quad\quad \quad{ \left. - 2\frac{1}{(t_{j}-y_0)(y_1-t_{-m+2+j})}N_{{\ttt}_{-m+1+j},m-2,-m+2+j}^{(i-2)}(a) \right.} \\
& \pushright{ \left. + \frac{1}{(y_1-t_{-m+2+j})(y_1-t_{-m+3+j})} N_{{\ttt}_{-m+1+j},m-2,-m+3+j}^{(i-2)}(a) \right] }\\
& = \cdots \\
& = (m-1)(m-2)\cdots(m-i) \times \\
& \quad \quad \quad \quad \left[ \frac{1}{(t_{j}-y_0)\cdots(t_{j-i+1}-y_0)}N_{{\ttt}_{-m+1+j},m-i,-m+1+j}(a) - \cdots \right. \\
& \quad \quad \quad \quad \quad { \left. + \frac{1}{(y_1-t_{-m+1+j+1})\cdots(y_1-t_{-m+1+j+i})}N_{{\ttt}_{-m+1+j},m-i,-m+1+j+i}(a)  \right]. }
\end{align*}
The only B-spline in this sum with a non-zero value at $x=a$ is $N_{{\ttt}_{-m+1+j},m-i,-m+i+1}(a) = 1$. Therefore, it follows that
\begin{equation*}
|N_{{\ttt}_{-m+1+j},m}^{(i)}(a)| \geq \frac{(m-1)(m-2)\cdots(m-i)}{\rho^{i}}\binom{i}{j} \geq \frac{m-1}{\rho^{m-1}},
\end{equation*}
while
\begin{equation*}
|N_{{\ttt}_{-m+1+j},m}^{(i)}(a)| \leq \frac{(m-1)(m-2)\cdots(m-i)}{\lambda^{i}}\binom{i}{j} \leq \frac{(m-1)!(m-1)!}{\lambda^{m-1}}. \qedhere
\end{equation*}
\end{proof}

We are now in a position to derive upper bounds on the spline coefficients $b_{m,i,k}$ in (\ref{eq_L_defn_knots_half}).

\begin{lemma}
For an integer $m\geq3$, let $\rho$ and $\lambda$ be defined by (\ref{eq_rho_lambda}), and set
\begin{equation}
\tau := \max\left\{ 1, \frac{\rho^{m-1}}{m-1}, m!\left(\frac{\rho^{m-1}(m-1)!(m-2)!}{\lambda^{m-1}}\right)^m \right\}.
\label{eq_tau}
\end{equation}
Then
\begin{equation}
|b_{m,i,k}| \leq \tau
\label{eq_b_H_upper_bound}
\end{equation}
for all $i=-m+1,\ldots,-1,N+1,\ldots,N+m-1$ and $k=0,\ldots,m-1$.
\label{lemma_upper_bound_b}
\end{lemma}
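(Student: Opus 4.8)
The plan is to recognize that, for each relevant $\ell$ (and, symmetrically, each $r$), the coefficients $b_{m,-\ell,k}$, $k=0,\ldots,m-1$, are the unique solution of the square linear system obtained by substituting the molecule $L_{m,-\ell}$ of (\ref{eq_L_defn_knots_half}) into the Hermite conditions (\ref{eq_b_cond_knots_half}). Writing $B$ for the $m\times m$ matrix with entries $B_{n,k} := N_{{\ttt}_{-m+1+k},m}^{(n)}(a)$, $n,k=0,\ldots,m-1$, the conditions $L_{m,-\ell}^{(n)}(a)=\delta_{\ell-n}$ read $B\,\mathbf{b}=\mathbf{e}_\ell$, where $\mathbf{b}=[b_{m,-\ell,0},\ldots,b_{m,-\ell,m-1}]^{T}$ and $\mathbf{e}_\ell$ is the $\ell$-th standard basis vector; hence $b_{m,-\ell,k}=(B^{-1})_{k,\ell}$. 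The right-endpoint coefficients $b_{m,N+r,k}$ are handled identically with $a$ replaced by $b$ and ${\ttt}_{-m+1+k}$ by ${\ttt}_{N+k}$, so it suffices to bound $(B^{-1})_{k,\ell}$.

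First I would observe that $B$ is lower triangular. By the boundary construction of the knot sequences in (\ref{eq_ttt_knots_half_m_even}) and (\ref{eq_ttt_knots_half_m_odd}), the point $a=t_0$ appears in ${\ttt}_{-m+1+k}$ with multiplicity $m-k$ (the $m-1-k$ stacked knots together with the leftmost of the knots spread over $[t_0,t_m]$), so the first non-vanishing derivative of $N_{{\ttt}_{-m+1+k},m}$ at $a$ is exactly of order $k$. Thus $B_{n,k}=0$ for $n<k$ while the diagonal entries $B_{k,k}=N_{{\ttt}_{-m+1+k},m}^{(k)}(a)$ are non-zero, giving $\det B=\prod_{k=0}^{m-1}B_{k,k}$. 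The lower bound in Lemma \ref{lemma_bind_boundary_b_splines} then yields $|\det B|\ge \bigl(\tfrac{m-1}{\rho^{m-1}}\bigr)^{m}$. This determinant estimate is the crux of the argument: the entrywise bounds of Lemma \ref{lemma_bind_boundary_b_splines} alone cannot control $\det B$ from below, since they do not exclude near-singular matrices with large entries, and only the triangular structure forced by the boundary multiplicities rescues the argument.

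Next I would bound the numerator by Cramer's rule. Because $\mathbf{e}_\ell$ has a single non-zero entry, $|b_{m,-\ell,k}|=|\det\hat{B}_{\ell,k}|/|\det B|$, where $\hat{B}_{\ell,k}$ is the $(m-1)\times(m-1)$ minor of $B$ obtained by deleting row $\ell$ and column $k$. Expanding by the Leibniz formula over its $(m-1)!$ terms and invoking the upper bound $|N_{{\ttt}_{-m+1+k},m}^{(n)}(a)|\le (m-1)!(m-1)!/\lambda^{m-1}$ of Lemma \ref{lemma_bind_boundary_b_splines} for each factor, I get $|\det\hat{B}_{\ell,k}|\le (m-1)!\bigl((m-1)!(m-1)!/\lambda^{m-1}\bigr)^{m-1}$. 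Dividing by the determinant lower bound and simplifying the factorial and power factors (using the normalizations $\rho\ge1$, $\lambda\le1$ of (\ref{eq_rho_lambda}) to absorb the $\lambda$-exponent mismatch $\lambda^{m-1}\le1$) shows the resulting quantity is dominated by the third entry $m!\bigl(\rho^{m-1}(m-1)!(m-2)!/\lambda^{m-1}\bigr)^{m}$ of $\tau$ in (\ref{eq_tau}). The remaining two entries of the maximum cover the trivial cases: the diagonal coefficient satisfies $|b_{m,-\ell,\ell}|=1/|B_{\ell,\ell}|\le \rho^{m-1}/(m-1)$ directly, and the constant $1$ ensures $\tau\ge1$.

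The main obstacle is precisely the lower bound on $\det B$; once triangularity is established, the remaining estimate is a routine (if tedious) comparison of factorials and powers. The delicate point is therefore the rigorous determination of the multiplicity of $a$ (and of $b$) in ${\ttt}_{-m+1+k}$ (respectively ${\ttt}_{N+k}$), which is where the piecewise even/odd constructions in (\ref{eq_ttt_knots_half_m_even})–(\ref{eq_ttt_knots_half_m_odd}) must be read carefully; with that in hand, the inequality $|b_{m,i,k}|\le\tau$ follows by the chain of estimates above.
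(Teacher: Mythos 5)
Your proposal is correct and takes essentially the same route as the paper: Cramer's rule applied to the collocation matrix $B$ (the paper's $S_m$), lower triangularity forced by the multiplicity of $a$ (resp.\ $b$) in the boundary knot sequences, the diagonal-product determinant bounded below and the remaining entries bounded above via Lemma \ref{lemma_bind_boundary_b_splines}, with the same case split into $k<\ell$, $k=\ell$, and $k>\ell$ matching the three entries of $\tau$ in (\ref{eq_tau}). The only cosmetic difference is that you bound the cofactor minor by $(m-1)!\bigl((m-1)!(m-1)!/\lambda^{m-1}\bigr)^{m-1}$ whereas the paper bounds the full replaced-column determinant by $m!\bigl((m-1)!(m-1)!/\lambda^{m-1}\bigr)^{m}$; both are dominated by the third entry of $\tau$ once $\rho\geq1$ and $\lambda\leq1$ are used.
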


\begin{proof}
Let $-\ell:=i\in\left\{-m+1,\ldots,0\right\}$ and $k\in\left\{0,\ldots,m-1\right\}$ be fixed (the proof for the case when $i\in\left\{N,\ldots,N+m-1\right\}$, $k\in\left\{0,\ldots,m-1\right\}$ is similar). From the linear system in the first line of (\ref{eq_L_defn_knots_half}) and the first equation in (\ref{eq_b_cond_knots_half}), we may obtain an explicit expression of $b_{m,-\ell,k}$ by using Cramer's rule; that is,
\begin{equation}
b_{m,-\ell,k} = \frac{\det S_{m,\ell,k}}{\det S_m},
\label{eq_b_form_determinants}
\end{equation}
where
\begin{equation*}
S_m := \begin{bmatrix}
N_{{\ttt}_{-m+1},m}(a) & N_{{\ttt}_{-m+2},m}(a) &\cdots & N_{{\ttt}_0,m}(a) \\
N_{{\ttt}_{-m+1},m}^{\prime}(a) & N_{{\ttt}_{-m+2},m}^{\prime}(a) & \cdots & N_{{\ttt}_0,m}^{\prime}(a) \\
\vdots & \vdots & \ddots & \vdots \\
N_{{\ttt}_{-m+1},m}^{(m-1)}(a) & N_{{\ttt}_{-m+2},m}^{(m-1)}(a) & \cdots & N_{{\ttt}_0,m}^{(m-1)}(a) \\
\end{bmatrix},
\end{equation*}
and $S_{m,\ell,k}$ is obtained from $S_m$ by replacing its $(k+1)^{\textup{th}}$ column with the column vector ${\ddd}_{\ell} := [\underbrace{0,\ldots,0}_{\ell},1,\underbrace{0,\ldots,0}_{m-\ell-1}]^T;$ that is,
\begin{equation}
S_{m,\ell,k} := \begin{bmatrix}
N_{{\ttt}_{-m+1},m}(a) & \cdots & N_{{\ttt}_{-m+k},m}(a) & 0 & N_{{\ttt}_{-m+k+2},m}(a) & \cdots & N_{{\ttt}_0,m}(a) \\
\vdots & \ddots & \vdots & \vdots & \vdots & \ddots & \vdots \\
N_{{\ttt}_{-m+1},m}^{(\ell-1)}(a) & \cdots & N_{{\ttt}_{-m+k},m}^{(\ell-1)}(a) & 0 & N_{{\ttt}_{-m+k+2},m}^{(\ell-1)}(a) & \cdots & N_{{\ttt}_0,m}^{(\ell-1)}(a) \\
N_{{\ttt}_{-m+1},m}^{(\ell)}(a) & \cdots & N_{{\ttt}_{-m+k},m}^{(\ell)}(a) & 1 & N_{{\ttt}_{-m+k+2},m}^{(\ell)}(a) & \cdots & N_{{\ttt}_0,m}^{(\ell)}(a) \\
N_{{\ttt}_{-m+1},m}^{(\ell+1)}(a) & \cdots & N_{{\ttt}_{-m+k},m}^{(\ell+1)}(a) & 0 & N_{{\ttt}_{-m+k+2},m}^{(\ell+1)}(a) & \cdots & N_{{\ttt}_0,m}^{(\ell+1)}(a) \\
\vdots & \ddots & \vdots & \vdots & \vdots & \ddots & \vdots \\
N_{{\ttt}_{-m+1},m}^{(m-1)}(a) & \cdots & N_{{\ttt}_{-m+k},m}^{(m-1)}(a) & 0 & N_{{\ttt}_{-m+k+2},m}^{(m-1)}(a) & \cdots & N_{{\ttt}_0,m}^{(m-1)}(a)
\end{bmatrix}.
\label{eq_S_mlk}
\end{equation}
From the construction of the B-splines $N_{{\ttt}_{-m+1},m},\ldots,N_{{\ttt}_0,m}$, it is clear that $S_{m}$ is a lower triangular matrix, so that its determinant is simply given by
\begin{equation}
\det S_m = \prod_{j=0}^{m-1}N_{{\ttt}_{-m+1+j},m}^{(j)}(a).
\label{eq_det_S_m}
\end{equation}
To bound the determinant of $S_{m,\ell,k}$, we consider three cases. First, if $k<\ell$, then $S_{m,\ell,k}$ is also a lower triangular matrix with a zero appearing on the main diagonal in the $(k+1)^{\textup{th}}$ column, so that $\det S_{m,\ell,k} = 0.$ In this case, we therefore have, from (\ref{eq_b_form_determinants}), that $b_{m,-\ell,k} = 0$.\\

Next, suppose that $k=\ell$. In this case, $S_{m,\ell,k}$ in (\ref{eq_S_mlk}) is again a lower triangular matrix, with determinant
\begin{equation*}
\det S_{m,\ell,\ell} = \prod_{\substack{j=0\\j\neq\ell}}^{m-1} N_{{\ttt}_{-m+1+j},m}^{(j)}(a).
\end{equation*}
Therefore, from (\ref{eq_b_form_determinants}) and using (\ref{eq_det_S_m}), we have
\begin{equation}
b_{m,-\ell,\ell} = \frac{1}{N_{{\ttt}_{-m+1+\ell},m}^{(\ell)}(a)}.
\label{eq_proof_b_1}
\end{equation}
If $\ell=0$, it follows from the construction of ${\ttt}_{-m+1}$ in (\ref{eq_ttt_knots_half_m_even}), (\ref{eq_ttt_knots_half_m_odd}), that $N_{{\ttt}_{-m+1},m}(a) = 1,$ so that (\ref{eq_proof_b_1}) yields
\begin{equation}
b_{m,0,0} = 1.
\label{eq_proof_b_2}
\end{equation}
If $\ell\neq 0$, we may apply the first set of inequalities in (\ref{eq_bound_b_spline_der_scheme_H}) in Lemma \ref{lemma_bind_boundary_b_splines}, with $i=j=\ell$, in (\ref{eq_proof_b_1}) to deduce that
\begin{equation}
|b_{m,-\ell,\ell}| \leq \frac{\rho^{m-1}}{m-1}.
\label{eq_proof_b_3}
\end{equation}
Lastly, let $k>\ell$, so that $S_{m,\ell,k}$ in (\ref{eq_S_mlk}) is no longer a lower triangular matrix. In this case, since $S_{m,\ell,k}$ is an $m\times m$ matrix, and from the upper bound in (\ref{eq_bound_b_spline_der_scheme_H}) in Lemma \ref{lemma_bind_boundary_b_splines}, we deduce that
\begin{equation*}
|\det S_{m,\ell,k}| \leq m!\left(\frac{(m-1)!(m-1)!}{\lambda^{m-1}}\right)^m.
\end{equation*}
This, together with (\ref{eq_b_form_determinants}), (\ref{eq_det_S_m}) and the lower bound in (\ref{eq_bound_b_spline_der_scheme_H}), yields
\begin{equation}
|b_{m,-\ell,k}| \leq m!\left(\frac{\rho^{m-1} (m-1)!(m-2)!}{\lambda^{m-1}}\right)^m.
\label{eq_proof_b_4}
\end{equation}
The result follows by combining (\ref{eq_proof_b_2})-(\ref{eq_proof_b_4}) and the definition of $\tau$ in (\ref{eq_tau}). \end{proof}

Using Lemma \ref{lemma_upper_bound_b}, we may now obtain the following upper bounds on the spline molecules $L_{m,i}$:

\begin{theorem}
For an integer $m\geq3$, and with $\tau$ defined in (\ref{eq_tau}),
\begin{equation}
|L_{m,i}(x)| \leq
\left\{
\begin{aligned}
& m\tau, \quad i=-m+1,\ldots,0, \ N,\ldots,N+m-1;\\
& 1, \quad i=1,\ldots,N-1, \\
\end{aligned}
\right.
\label{eq_L_H_upper_bound}
\end{equation}
for all $x\in[a,b]$.
\label{thm_upper_bound_L}
\end{theorem}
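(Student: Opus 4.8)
The plan is to split the claim along the two index ranges, since the molecules $L_{m,i}$ have structurally different forms in each. For the boundary molecules ($i=-m+1,\ldots,0$ and $i=N,\ldots,N+m-1$) I would work directly from the first and last lines of (\ref{eq_L_defn_knots_half}): each such molecule is a linear combination of the $m$ normalized B-splines $N_{\ttt_{-m+1+k},m}$ (resp.\ $N_{\ttt_{N+k},m}$), $k=0,\ldots,m-1$, with coefficients $b_{m,i,k}$. Invoking the coefficient bound $|b_{m,i,k}|\le\tau$ from Lemma \ref{lemma_upper_bound_b}, together with the elementary fact that a normalized B-spline satisfies $0\le N_{\ttt,m}(x)\le 1$ pointwise, the triangle inequality gives
\begin{equation*}
|L_{m,-\ell}(x)| \le \sum_{k=0}^{m-1} |b_{m,-\ell,k}|\, N_{\ttt_{-m+1+k},m}(x) \le \sum_{k=0}^{m-1}\tau = m\tau,
\end{equation*}
with the right-boundary case identical. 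One bookkeeping point I would flag: Lemma \ref{lemma_upper_bound_b} is stated for $i\neq 0,N$, but its proof already covers $\ell=0$ (giving $b_{m,0,0}=1\le\tau$ and, symmetrically, $b_{m,N,0}=1$), so the endpoint indices are included.

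For the interior molecules ($i=1,\ldots,N-1$), each $L_{m,i}$ is by (\ref{eq_L_defn_knots_half}) a single normalized B-spline divided by its value at the node $y_i$, i.e.\ $L_{m,i}(x)=N(x)/N(y_i)$ with $N(y_i)>0$ (the node lies in the interior of the support, where B-splines are positive). Nonnegativity of B-splines immediately yields $L_{m,i}(x)\ge 0$. For the upper bound I would reduce $|L_{m,i}(x)|\le 1$ to the single pointwise inequality $N(x)\le N(y_i)$, that is, to showing the B-spline attains its maximum over its support at the interpolation node $y_i$. I would argue this from two structural facts: first, a single B-spline of order $m$ is bell-shaped (unimodal), positive on the interior of its support and vanishing at the two endpoints $y_{i-1},y_{i+1}$ of $\supp L_{m,i}$; and second, the construction of $\ttt$ places $y_i$ as the central knot of $N$, with $q_m-1$ (resp.\ $r_m-1$ or $r_m-2$) auxiliary knots inserted on each side. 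Combined with the normalization $L_{m,i}(y_i)=1$ from (\ref{eq_L_interpolates_knots_half}), unimodality with peak at $y_i$ delivers $0\le L_{m,i}\le 1$.

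The hard part will be the second step of the interior case: confirming that the mode of $N$ sits exactly at $y_i$ rather than merely nearby. The auxiliary knots are equally spaced \emph{within} each of $[y_{i-1},y_i]$ and $[y_i,y_{i+1}]$, but the two spacings differ under non-uniform sampling, so $N$ is not literally symmetric about $y_i$ and its peak location is governed by all $m+1$ knots. I would therefore concentrate the effort on either (i) verifying that the mode coincides with the central knot $y_i$ for this specific midpoint / equally-spaced configuration, or (ii) quantifying the peak value relative to $N(y_i)$ if exact centrality fails off the uniform case; the clean constant $1$ is exact for uniform spacing and is precisely what the unimodal-with-central-peak argument yields. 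Everything else — the boundary estimate and the interior nonnegativity — is routine given Lemma \ref{lemma_upper_bound_b} and the $0\le N\le 1$ property of normalized B-splines.
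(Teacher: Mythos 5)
Your treatment of the boundary molecules coincides with the paper's proof essentially verbatim: the paper likewise writes $|L_{m,i}(x)| \le \sum_{k=0}^{m-1} |b_{m,i,k}\, N_{\ttt_{-m+1+k},m}(x)| \le m\tau$, using the coefficient bound \eqref{eq_b_H_upper_bound} of Lemma \ref{lemma_upper_bound_b} together with the fact that normalized B-splines take values in $[0,1]$. Your bookkeeping remark is also accurate: the lemma is stated only for $i\le -1$ and $i\ge N+1$, but its proof runs over $\ell=0,\ldots,m-1$ and yields $b_{m,0,0}=1\le\tau$, so the indices $i=0$ and $i=N$ needed in Theorem \ref{thm_upper_bound_L} are covered.

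For the interior molecules you and the paper part ways, and you are right to hesitate where the paper does not. The paper's entire argument for $i=1,\ldots,N-1$ is the sentence that the bound ``follows immediately from the construction of $L_{m,i}$ in \eqref{eq_L_defn_knots_half}''; this silently assumes that the B-spline attains its maximum at $y_i$ --- precisely the step you refused to take for granted. In fact that step fails, so the constant $1$ in \eqref{eq_L_H_upper_bound} is not correct in general; moreover, contrary to your closing remark, uniform spacing does not rescue the odd-$m$ case, because the knot insertion there is asymmetric about $y_i$. Concretely, take $m=3$ (so $r_m=2$), uniformly spaced samples $y_{i-1}=0$, $y_i=1$, $y_{i+1}=2$, and $i$ odd: the construction inserts one knot in $(y_{i-1},y_i)$ and none in $(y_i,y_{i+1})$, so the relevant B-spline has knots $0,\tfrac12,1,2$. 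Its value at $y_i$ is $N(1)=\tfrac{2-1}{2-1/2}=\tfrac23$, while on $[\tfrac12,1]$ one has $N(x)=2x(1-x)+\tfrac43(2-x)(x-\tfrac12)$, maximized at $x=\tfrac45$ with $N(\tfrac45)=\tfrac45$; hence $L_{3,i}(\tfrac45)=\tfrac65>1$. An even-$m$ counterexample with non-uniform data: $m=4$, samples $0,1,5$, knots $0,\tfrac12,1,3,5$, where the B-spline derivative recursion gives $N'(1)=3\bigl(\tfrac{4/5}{3}-\tfrac{1/5}{9/2}\bigr)=\tfrac23>0$, so the mode lies strictly to the right of $y_i$ and again $\sup_x L_{m,i}(x)>1$. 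What survives of your outline is exactly your fallback (ii): positivity gives $L_{m,i}\ge 0$, and trivially $|L_{m,i}(x)|\le 1/N(y_i)$, so one can restore the theorem by bounding $N(y_i)$ below by a constant depending on $m$ and the local mesh ratio (legitimate here, since $y_i$ is an interior knot flanked on each side by the equally spaced inserted knots); this only changes the constants, and propagates harmlessly into the $B$'s, $C$'s and $D$'s of Theorem \ref{thm_error_blending_interpolation}. So your proposal is incomplete at precisely the point where the paper's own proof is unsupported, and your diagnosis of that point --- not the paper's claim of immediacy --- is the mathematically correct one.
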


\begin{proof}
First, if $i\in\left\{1,\ldots,N-1\right\}$, the result follows immediately from the construction of $L_{m,i}$ in (\ref{eq_L_defn_knots_half}). On the other hand, for $i\in\left\{-m+1,\ldots,0\right\}$ or $i\in\left\{N,\ldots,N+m-1\right\}$, we have, from the spline molecule definition (\ref{eq_L_defn_knots_half}), (\ref{eq_b_H_upper_bound}) in Lemma \ref{lemma_upper_bound_b}, the properties of the normalized B-splines (see, for example, \cite[Theorem~6.4]{chui1997wavelets}) and the construction of the knot sequences ${\ttt}_{-m+1},\ldots,{\ttt}_0$ in (\ref{eq_ttt_knots_half_m_even})-(\ref{eq_ttt_knots_half_m_odd}),
\begin{equation*}
|L_{m,i}(x)| \leq \sum_{k=0}^{m-1} |b_{m,i,k} N_{{\ttt}_{-m+1+k},m}(x)| \leq m\tau. \qedhere
\end{equation*}
\end{proof}

\subsection{Supremum norm approximation error of blending spline interpolation}
We are now in a position to analyze the approximation order of the blending interpolation operator ${\PP}_m$. In the following, $||\cdot||_{\infty,[x_i,x_{i+1}]}$ denotes the uniform (or supremum) norm on the interval $[x_i,x_{i+1}]$; that is $||g||_{\infty,[x_i,x_{i+1}]} := \sup \left\{ |g(x)|: \ x\in[x_i,x_{i+1}] \right\}$.

\begin{theorem}
Let $f\in C^m[a,b]$, and define
\begin{equation}
\varepsilon := \sup_{n=0,\ldots,N} |x_{n+1} - x_n|.
\label{eq_epsilon}
\end{equation}
Let $\gamma$, $\delta$ and $\tau$ be defined as in (\ref{eq_gamma_delta_H}) and (\ref{eq_tau}), respectively. Then the supremum norm approximation error of blending spline interpolation is given by
\begin{equation}
|| f - {\PP}_m f ||_{\infty,[x_i,x_{i+1}]} 
\leq \left\{
\begin{aligned}
& ||f^{(m)}||_{\infty,[x_i,x_{i+1}]} U_{m,\varepsilon,\gamma,\delta,\tau}, \quad i=0,\ 1;\\
&  ||f^{(m)}||_{\infty,[x_i,x_{i+1}]} V_{m,\varepsilon,\gamma,\delta}, \quad i=2,\ldots,N-m+1;\\
&  ||f^{(m)}||_{\infty,[x_i,x_{i+1}]} W_{m,\varepsilon,\gamma,\delta}, \quad i=N-m+2,\ldots,N-2;\\
&  ||f^{(m)}||_{\infty,[x_i,x_{i+1}]} X_{m,\varepsilon,\gamma,\delta,\tau}, \quad i=N-1, \ N,\\
\end{aligned}
\right.
\label{eq_error_P_H}
\end{equation}
where
\begin{equation*}
\left\{
\begin{aligned}
& U_{m,\varepsilon,\gamma,\delta,\tau} := \varepsilon^{m}\left(A_1 + A_2\left( \tfrac{\gamma}{\delta} \right)^{m-1} + A_3\tau\left( \tfrac{\gamma}{\delta} \right)^{m-1}\left(\tfrac{2}{\delta}\right)^{m-1}\right) \\
& \quad \quad \quad \quad \quad \quad \quad \quad{+\varepsilon \left(A_4\left( \tfrac{\gamma}{\delta} \right)^{m-1} + A_5\tau\left( \tfrac{\gamma}{\delta} \right)^{m-1}\left(\tfrac{2}{\delta}\right)^{m-1} + A_6\tau\right); } \\
& V_{m,\varepsilon,\gamma,\delta} := \varepsilon^{m}\left(B_1 + B_2\left( \tfrac{\gamma}{\delta} \right)^{m-1}\right);\\
& W_{m,\varepsilon,\gamma,\delta} := \varepsilon^{m}\left(C_1 + C_2\left( \tfrac{\gamma}{\delta} \right)^{m-1}\right) + \varepsilon C_3 \left( \tfrac{\gamma}{\delta} \right)^{m-1}\left(\tfrac{1-\varepsilon^{m-1}}{1-\varepsilon}\right);\\
& X_{m,\varepsilon,\gamma,\delta,\tau} := \varepsilon^{m}\left(D_1 + D_2\left( \tfrac{\gamma}{\delta} \right)^{m-1} + D_3\tau\left( \tfrac{\gamma}{\delta} \right)^{m-1}\left(\tfrac{2}{\delta}\right)^{m-1}\right) \\
& \pushright{+ \varepsilon\left(D_4\left( \tfrac{\gamma}{\delta} \right)^{m-1}\left(\tfrac{1-\varepsilon^{m-1}}{1-\varepsilon}\right) + D_5\tau\left(\tfrac{1-\varepsilon^{m-1}}{1-\varepsilon}\right)\right. }\\
& \pushright{\left. + D_6\tau\left( \tfrac{\gamma}{\delta} \right)^{m-1}\left(\tfrac{2}{\delta}\right)^{m-1}\left(\tfrac{1-\varepsilon^{m-1}}{1-\varepsilon}\right) \right), }\\
\end{aligned}
\right.
\end{equation*}
and where the $A$'s, $B$'s, $C$'s and $D$'s are constants depending only on $m$.
\label{thm_error_blending_interpolation}
\end{theorem}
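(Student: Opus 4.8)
The plan is to reduce everything to the local-interpolation error of the quasi-interpolation residual. Writing the blending operator in the form $\PP_m = \QQ_m + \RR_m(\II_m - \QQ_m)$ of (\ref{eq_RR+QQ}) and setting $h := f - \QQ_m f$, a one-line computation gives
\begin{equation*}
f - \PP_m f = (f - \QQ_m f) - \RR_m(f - \QQ_m f) = h - \RR_m h,
\end{equation*}
so the global error is exactly the error of the local interpolant $\RR_m$ applied to $h$. On a fixed subinterval $[x_i,x_{i+1}]$, and for any $p\in\pi_{m-1}$, the polynomial-reproduction identities of Theorems \ref{thm_quasi_knots_half} and \ref{thm_PP_conditions} yield $h = (f-p) - \QQ_m(f-p)$ and $f-\PP_m f = (f-p)-\PP_m(f-p)$; I would then take $p$ to be the degree-$(m-1)$ Taylor polynomial of $f$ based in $[x_i,x_{i+1}]$ (or at $a$, resp. $b$, for the outermost intervals), so that with $g:=f-p$ Taylor's theorem controls all the data that enter $\PP_m g$: nodal values obey $|g(y_j)|\le C\varepsilon^{m}\,||f^{(m)}||_{\infty,[x_i,x_{i+1}]}$ for every $y_j$ inside the $O(m\varepsilon)$-wide support of an active molecule, while an $\ell$-th derivative evaluated at distance $d$ from the base point satisfies $|g^{(\ell)}|\le C\,d^{\,m-\ell}\,||f^{(m)}||_{\infty,[x_i,x_{i+1}]}$.

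Next I would localize $\PP_m g$ through the support lists (\ref{eq_M_support_knots_half}) and (\ref{eq_L_support_knots_half}): on each $[x_i,x_{i+1}]$ only finitely many molecules are nonzero, and \emph{which} ones are nonzero is precisely what produces the four index ranges in (\ref{eq_error_P_H}). For the genuinely interior range $i=2,\dots,N-m+1$ no boundary molecule of either operator is active, so only the interior molecules $M_{m,j}$ and $L_{m,j}$ (with $|L_{m,j}|\le1$) contribute; bounding them by Theorems \ref{thm_upper_bound_M} and \ref{thm_upper_bound_L}, using that the B-splines form a partition of unity so that only $O(m)$ overlap at any point, and pairing each with its $O(\varepsilon^m)$ function-value coefficient, gives the clean optimal estimate $V_{m,\varepsilon,\gamma,\delta}=\varepsilon^{m}\big(B_1+B_2(\gamma/\delta)^{m-1}\big)$. (On the left half of this range, where the quasi-interpolation molecules $M_{m,-\ell}$ still reach in, I would base $p$ at $a$ so that $g^{(\ell)}(a)=0$ annihilates their derivative coefficients, keeping the bound at order $\varepsilon^m$.)

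The boundary ranges require the remaining inputs, and this is where the main obstacle lies. When $i=N-m+2,\dots,N-2$, the right-hand quasi-interpolation molecules $M_{m,N+r}$ (whose support reaches down to $x_{N-m+2}$) switch on with coefficients $g^{(r)}(b)$; summing the Taylor bounds $|g^{(r)}(b)|\le C\varepsilon^{m-r}||f^{(m)}||_\infty$ over $r=1,\dots,m-1$ generates exactly the geometric factor $\sum_{r=1}^{m-1}\varepsilon^{m-r}=\varepsilon\,\frac{1-\varepsilon^{m-1}}{1-\varepsilon}$ appearing in $W_{m,\varepsilon,\gamma,\delta}$. Finally, on the outer ranges $i\in\{0,1\}$ and $i\in\{N-1,N\}$ the \emph{local} boundary molecules $L_{m,-\ell}$, $L_{m,N+r}$ (supported in $[x_0,x_2]$ and $[x_{N-1},x_{N+1}]$) become active, and their coefficients are the residual derivatives $h^{(\ell)}(a)$, $h^{(r)}(b)$. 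The delicate step is to estimate these: writing $h^{(\ell)}(a) = -(\QQ_m g)^{(\ell)}(a) = -\sum_j g(y_j)\,M_{m,j}^{(\ell)}(a)$ forces the use of the derivative bound of Theorem \ref{thm_upper_bound_M_der}, which injects the factor $(2/\delta)^{m-1}$, while $|L_{m,-\ell}|\le m\tau$ from Theorem \ref{thm_upper_bound_L} injects the factor $\tau$; combining these with the same Taylor-remainder bookkeeping yields the mixed contributions $\tau(\gamma/\delta)^{m-1}(2/\delta)^{m-1}$ and the $\varepsilon$-linear corrections recorded in $U_{m,\varepsilon,\gamma,\delta,\tau}$ and $X_{m,\varepsilon,\gamma,\delta,\tau}$. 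The proof then closes by collecting, on each of the four ranges, the interior and boundary contributions and absorbing the overlap counts and the binomial constants from the B-spline derivative recursion into the $m$-dependent constants $A_k,B_k,C_k,D_k$; the hard part throughout is the careful boundary accounting, since an off-by-one in which molecules are declared active changes whether a term appears at order $\varepsilon^m$ or at the suboptimal order $\varepsilon$.
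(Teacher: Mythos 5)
Your overall architecture coincides with the paper's: the same four-range case split driven by the supports (\ref{eq_M_support_knots_half}) and (\ref{eq_L_support_knots_half}), the same molecule bounds (Theorems \ref{thm_upper_bound_M}, \ref{thm_upper_bound_M_der} and \ref{thm_upper_bound_L}) injecting the factors $\left(\gamma/\delta\right)^{m-1}$, $\left(2/\delta\right)^{m-1}$ and $\tau$, and the same geometric-sum mechanism $\sum_{r}\varepsilon^{m-r}=\varepsilon\,\tfrac{1-\varepsilon^{m-1}}{1-\varepsilon}$ at the boundary ranges. Where you genuinely differ is the reduction. The paper never subtracts a Taylor polynomial: it uses the Peano-kernel identity (\ref{eq_proof_error_4}), $f(x)-({\PP}_m f)(x)=\int_{x_i}^{x_{i+1}}\tfrac{f^{(m)}(y)}{(m-1)!}\left[h(x,y)-({\PP}_m h(\cdot,y))(x)\right]\mathrm{d}y$ with $h(x,y)=(x-y)_{+}^{m-1}$, and then bounds the kernel error directly, exploiting $h(y_j)=0$ for $j\leq i-1$, $|h(y_j)|\leq(k\varepsilon)^{m-1}$, and $h^{(n)}(a)\in\{0,(m-1)!\}$; in particular, the $\varepsilon$-linear terms in $U$ and $X$ arise there from the \emph{constant} kernel data $h^{(m-1)}(a)=h^{(m-1)}(b)=(m-1)!$, not from mixed contributions. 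Your factorization $f-{\PP}_mf=({\II}_m-{\RR}_m)({\II}_m-{\QQ}_m)f$ is correct and tidier than the paper's direct expansion of (\ref{eq_PP_knots_half}), and your identity $h^{(\ell)}(a)=-({\QQ}_m g)^{(\ell)}(a)=-\sum_j g(y_j)M_{m,j}^{(\ell)}(a)$ (valid once $p$ is based at $a$) is the right analogue of the paper's ${\RR}_m{\QQ}_m$ cross terms.

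There is, however, one step in your route that does not deliver the statement as written. Taylor's theorem gives $|g(y_j)|\leq C\varepsilon^{m}\|f^{(m)}\|_{\infty,I}$ only when $I$ contains both the base point and $y_j$; since the molecules active on $[x_i,x_{i+1}]$ use data $f(y_j)$ at points up to $O(m\varepsilon)$ outside $[x_i,x_{i+1}]$ (and the boundary data $f^{(\ell)}(a)$, $f^{(r)}(b)$ in the outer ranges), your bounds necessarily carry $\|f^{(m)}\|$ over the enlarged data window, roughly $[x_{i-m+1},x_{i+m}]\cap[a,b]$, rather than over $[x_i,x_{i+1}]$ as asserted in (\ref{eq_error_P_H}). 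The kernel identity is exactly what lets the paper keep the norm on $[x_i,x_{i+1}]$: there the only appearance of $f$ is through $f^{(m)}(y)$ with the integration variable confined to $[x_i,x_{i+1}]$, while all data entering ${\PP}_m h(\cdot,y)$ are values of the fixed kernel, bounded with no reference to $f$ at all. (To be fair, the paper asserts (\ref{eq_proof_error_4}) without justification, and verifying it amounts to checking that the kernel error vanishes for $y$ outside $[x_i,x_{i+1}]$ --- the very point your approach sidesteps at the cost of the wider norm.) So you must either first establish an identity of the type (\ref{eq_proof_error_4}), which your Taylor bookkeeping does not supply, or weaken the conclusion to the enlarged-window norm; with that caveat, the rest of your accounting --- which molecules are active in each range, the provenance of each factor, and the absorption of overlap counts into $m$-dependent constants --- matches the paper's proof and goes through.
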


\begin{proof}
Let $i\in\left\{ 0,\ldots, N \right\}$ be fixed, and let $x\in\left[x_i, x_{i+1}\right]$. Then, with the definition
\begin{equation}
h(x,y) := (x-y)_{+}^{m-1},
\label{eq_h_def}
\end{equation}
it follows that
\begin{equation}
f(x) - ({\PP}_m f)(x) = \int_{x_i}^{x_{i+1}} \frac{f^{(m)}(y)}{(m-1)!}\left[h(x,y) - ({\PP}_m h(\cdot,y))(x) \right] \mathrm{d}y,
\label{eq_proof_error_4}
\end{equation}
since ${\PP}_m$ preserves polynomials in $\pi_{m-1}$.\\

We start by considering the second inequality in (\ref{eq_error_P_H}). To this end, let \\$i\in\left\{m-1,\ldots,N-m+1 \right\}$, with $x\in[x_i,x_{i+1}]$. In the following, we suppress the variable of integration $y$, so that $h(x,y) = h(x)$. From the definition of ${\PP}_m$ in (\ref{eq_PP_knots_half}), the support properties (\ref{eq_M_support_knots_half}) and (\ref{eq_L_support_knots_half}) of the spline molecules, and the definition of ${\QQ}_m$ in (\ref{eq_QQ_knots_half}), we have
\begin{multline}
h(x) - ({\PP}_mh)(x)
= h(x) - \sum_{j=i+1-m}^{i+m-1} h(y_j)M_{m,j}(x) - \sum_{j=i-1}^{i+1}h(y_j)L_{m,j}(x) \\
+ \sum_{j=i-1}^{i+1} \left[ \sum_{k=j+1-m}^{j+m-1} h(y_k) M_{m,k}(y_j) \right]L_{m,j}(x).
\label{eq_proof_1}
\end{multline}
Next, we observe that
\begin{equation*}
h(y_j) = (y_j-y)^{m-1}_{+} = 0, \quad y\geq y_j,
\end{equation*}
from the definition of $h$ in (\ref{eq_h_def}), so that, since $y\in [x_i, x_{i+1}]$ and $x_j < y_j < x_{j+1}$ for all $j=1,\ldots,N-1$ (from (\ref{eq_yyy_knots_half})),
\begin{equation}
h(y_j) = 0, \quad j \leq i-1.
\label{eq_h_zero}
\end{equation}
Therefore, we may use the upper bounds (\ref{eq_M_H_upper_bound}) in Theorem \ref{thm_upper_bound_M} and (\ref{eq_L_H_upper_bound}) in Theorem \ref{thm_upper_bound_L} to deduce that
\begin{align*}
& |h(x) - ({\PP}_mh)(x) | \\
& \leq |h(x)| + \frac{1}{(m-2)!}\left(\frac{\gamma}{\delta}\right)^{m-1} \sum_{j=i}^{i+m-1}|h(y_j)| + \sum_{j=i}^{i+1} |h(y_j)|  { + \frac{1}{(m-2)!}\left(\frac{\gamma}{\delta}\right)^{m-1}\sum_{j=i-1}^{i+1}\sum_{k=j+1-m}^{j+m-1} |h(y_k)| } \\
& \leq \varepsilon^{m-1} + \frac{1}{(m-2)!}\left(\frac{\gamma}{\delta}\right)^{m-1} \left(\varepsilon^{m-1} + (2\varepsilon)^{m-1} + \cdots + (m\varepsilon)^{m-1}\right) + \left(\varepsilon^{m-1} + (2\varepsilon)^{m-1}\right) \\
& \quad \quad {+ \frac{1}{(m-2)!}\left(\frac{\gamma}{\delta}\right)^{m-1} \left[ \sum_{k=i}^{i+m-2} |h(y_k)| + \sum_{k=i}^{i+m-1} |h(y_k)| + \sum_{k=i}^{i+m} |h(y_k)| \right] } \\
& \leq (2+2^{m-1})\varepsilon^{m-1} + \frac{1}{(m-2)!}\left(\frac{\gamma}{\delta}\right)^{m-1} \left[\left(\varepsilon^{m-1} + \cdots + ((m-1)\varepsilon)^{m-1} \right) \right. \\
& \quad \quad { \left. + 2\left(\varepsilon^{m-1}+\cdots+(m\varepsilon)^{m-1}\right) + \left(\varepsilon^{m-1}+\cdots+((m+1)\varepsilon)^{m-1}\right) \right] } \\
& =(2+2^{m-1})\varepsilon^{m-1} + \tilde{B}_1\varepsilon^{m-1}\left(\frac{\gamma}{\delta}\right)^{m-1},
\end{align*}
where $\tilde{B}_1$ is a constant only depending on $m$. Therefore, (\ref{eq_proof_error_4}) becomes
\begin{align*}
|f(x) - ({\PP}_m f)(x)|
\leq ||f^{(m)}||_{\infty,[x_i,x_{i+1}]}\left[ \frac{(2+2^{m-1})\varepsilon^{m}}{(m-1)!} + \frac{\tilde{B}_1\varepsilon^{m}}{(m-1)!}\left(\frac{\gamma}{\delta}\right)^{m-1} \right],
\end{align*}
from which our result follows, with $B_1=\frac{2+2^{m-1}}{(m-1)!}$ and $B_2=\frac{\tilde{B}_1}{(m-1)!}$.\\

Next, for a fixed $i\in\left\{2,\ldots,m-2\right\}$, let $x\in [x_i,x_{i+1}]$. By applying the definition of ${\PP}_m$ in (\ref{eq_PP_knots_half}), the support properties (\ref{eq_M_support_knots_half}) and (\ref{eq_L_support_knots_half}) of the spline molecules, and the definition of ${\QQ}_m$ in (\ref{eq_QQ_knots_half}), we deduce that
\begin{align}
& h(x) - ({\PP}_mh)(x)\nonumber \\
&= h(x) - \sum_{j=i+1-m}^{-1} h^{(-j)}(a) M_{m,j}(x) - \sum_{j=0}^{i+m-1} h(y_j)M_{m,j}(x) - \sum_{j=i-1}^{i+1}h(y_j)L_{m,j}(x) \nonumber\\
& \quad \quad{ + \sum_{j=i-1}^{i+1} \left[ \sum_{k=j+1-m}^{-1} h^{(-k)}(a)M_{m,k}(y_j) + \sum_{k=0}^{j+m-1} h(y_k) M_{m,k}(y_j) \right]L_{m,j}(x). }
\label{eq_proof_2}
\end{align}
Now, we deduce from the definition of $h$ in (\ref{eq_h_def}) that
\begin{equation}
h^{(n)}(x) = (m-1)(m-2)\cdots(m-n)(x-y)^{m-1-n}_{+}, \quad n=0,\ldots,m-1,
\label{eq_h_der_1}
\end{equation}
so that
\begin{equation}
h^{(n)}(a) = \begin{cases}
(m-1)!, & n=m-1; \\
0, & n=0,\ldots,m-2.
\end{cases}
\label{eq_h_der_2}
\end{equation}
Therefore, (\ref{eq_proof_2}) yields the same formulation as in (\ref{eq_proof_1}), so that (\ref{eq_proof_error_4}) yields the same result as before.\\

Next, let $x\in[x_i,x_{i+1}], \ i\in\left\{0,1\right\}$. We proceed to derive the first inequality in (\ref{eq_error_P_H}). In this case, we have
\begin{align*}
& h(x) - ({\PP}_mh)(x) \\
& = h(x) - \sum_{j=i+1-m}^{-1} h^{(-j)}(a) M_{m,j}(x) - \sum_{j=0}^{i+m-1}h(y_j) M_{m,j}(x) \\
& \quad \quad{ - \sum_{j=1-m}^{-1}h^{(-j)}(a)L_{m,j}(x) - \sum_{j=0}^{i+1} h(y_j) L_{m,j}(x) } \\
& \quad \quad{ + \sum_{j=1-m}^{-1} \left[ \sum_{k=j+1-m}^{-1} h^{(-k)}(a)M_{m,k}^{(-j)}(a) + \sum_{k=0}^{j+m-1} h(y_k) M_{m,k}^{(-j)}(a) \right] L_{m,j}(x) } \\
& \quad \quad{ + \sum_{j=0}^{i+1} \left[ \sum_{k=j+1-m}^{-1} h^{(-k)}(a)M_{m,k}(y_j) + \sum_{k=0}^{j+m-1} h(y_k) M_{m,k}(y_j) \right]L_{m,j}(x), }
\end{align*}
from the definition of ${\PP}_m$ in (\ref{eq_PP_knots_half}), the support properties (\ref{eq_M_support_knots_half}) and (\ref{eq_L_support_knots_half}) of the spline molecules, and the definition of ${\QQ}_m$ in (\ref{eq_QQ_knots_half}). Now, we use the upper bounds (\ref{eq_M_H_upper_bound}) in Theorem \ref{thm_upper_bound_M} and (\ref{eq_L_H_upper_bound}) in Theorem \ref{thm_upper_bound_L} and (\ref{eq_M_H_der_upper_bound}) in Theorem \ref{thm_upper_bound_M_der}, together with (\ref{eq_h_zero}) and (\ref{eq_h_der_2}), to obtain
\begin{align*}
& |h(x) - ({\PP}_mh)(x)| \\
& \leq |h(x)| + \frac{1}{(m-2)!}\left(\frac{\gamma}{\delta}\right)^{m-1}\sum_{j=1}^{m-1-i}|h^{(j)}(a)| + \frac{1}{(m-2)!}\left(\frac{\gamma}{\delta}\right)^{m-1}\sum_{j=1}^{i+m-1}|h(y_j)| \\
& \quad \quad{ + m\tau\sum_{j=1}^{m-1}|h^{(j)}(a)| + m\sum_{j=1}^{i+1}|h(y_j)| } \\
& \quad \quad{ + m\tau\frac{1}{(m-2)!}\left(\frac{\gamma}{\delta}\right)^{m-1}m\left(\frac{2}{\delta}\right)^{m-1} \sum_{j=1}^{m-1} \left[ \sum_{k=1}^{m-1}|h^{(k)}(a)| + \sum_{k=1}^{m-1-j}|h(y_k)| \right] } \\
& \quad \quad{ + m\frac{1}{(m-2)!}\left(\frac{\gamma}{\delta}\right)^{m-1} \sum_{j=0}^{i+1} \left[ \sum_{k=1}^{m-1-j}|h^{(k)}(a)| + \sum_{k=1}^{j+m-1}|h(y_k)| \right] } \\
& \leq \varepsilon^{m-1} + \frac{1}{(m-2)!}\left(\frac{\gamma}{\delta}\right)^{m-1}(m-1)! { + \frac{1}{(m-2)!}\left(\frac{\gamma}{\delta}\right)^{m-1} \left( (2\varepsilon)^{m-1} + \cdots + ((m+1)\varepsilon)^{m-1} \right) } \\
& \quad \quad{ + m\tau(m-1)! + m\left( (2\varepsilon)^{m-1} + (3\varepsilon)^{m-1}\right) }\\
& \quad \quad{ + \frac{m\tau}{(m-2)!}\left(\frac{\gamma}{\delta}\right)^{m-1}m\left(\frac{2}{\delta}\right)^{m-1}   \left[ (m-1)(m-1)! + \left((2\varepsilon)^{m-1} + \cdots + ((m-1)\varepsilon)^{m-1}\right) \right] } \\
& \quad \quad{ + \frac{m}{(m-2)!}\left(\frac{\gamma}{\delta}\right)^{m-1} \left[ (m-1)! + 3\left( (2\varepsilon)^{m-1} + \cdots + ((m+2)\varepsilon)^{m-1} \right) \right] } \\
& \leq (1+m2^{m-1}+m3^{m-1})\varepsilon^{m-1} + \frac{(m+1)(m-1)!}{(m-2)!}\left(\frac{\gamma}{\delta}\right)^{m-1} + \tilde{A}_1\varepsilon^{m-1}\left(\frac{\gamma}{\delta}\right)^{m-1}\\
& \quad \quad + m\tau(m-1)! {  + \frac{m^2\tau(m-1)(m-1)!}{(m-2)!}\left(\frac{\gamma}{\delta}\right)^{m-1}\left(\frac{2}{\delta}\right)^{m-1} } {+ \tilde{A}_2\tau\varepsilon^{m-1}\left(\frac{\gamma}{\delta}\right)^{m-1}\left(\frac{2}{\delta}\right)^{m-1},}
\end{align*}
where $\tilde{A}_1$ and $\tilde{A}_2$ are constants only depending on $m$. Therefore, (\ref{eq_proof_error_4}) yields
\begin{align*}
& |f(x) - ({\PP}_m f)(x)| \\
& \leq ||f^{(m)}||_{\infty,[x_i,x_{i+1}]}\left[ \frac{(1+m2^{m-1}+m3^{m-1})\varepsilon^{m}}{(m-1)!} + \frac{(m+1)\varepsilon}{(m-2)!}\left(\frac{\gamma}{\delta}\right)^{m-1}  + \frac{\tilde{A}_1\varepsilon^{m}}{(m-1)!}\left(\frac{\gamma}{\delta}\right)^{m-1} \right.\\
& \quad \quad + m\tau\varepsilon { \left. + \frac{m^2\tau\varepsilon(m-1)}{(m-2)!}\left(\frac{\gamma}{\delta}\right)^{m-1}\left(\frac{2}{\delta}\right)^{m-1} + \frac{\tilde{A}_2\tau\varepsilon^{m}}{(m-1)!}\left(\frac{\gamma}{\delta}\right)^{m-1}\left(\frac{2}{\delta}\right)^{m-1} \right],  }
\end{align*}
from which our result follows with $A_1=\frac{(1+m2^{m-1}+m3^{m-1})}{(m-1)!}$, $A_2=\frac{\tilde{A}_1}{(m-1)!}$, $A_3=\frac{\tilde{A}_2}{(m-1)!}$, $A_4=\frac{(m+1)}{(m-2)!}$, $A_5=\frac{m^2(m-1)}{(m-2)!}$ and $A_6=m$.\\

Next, we turn our attention to the third inequality in (\ref{eq_error_P_H}), and let \\$i\in\left\{N-m+2,\ldots,N-2\right\}$, with $x\in[x_i,x_{i+1}]$. In this case, we have, from the definition of ${\PP}_m$ in (\ref{eq_PP_knots_half}), the spline molecule support properties in (\ref{eq_M_support_knots_half}) and (\ref{eq_L_support_knots_half}), and the definition of ${\QQ}_m$ in (\ref{eq_QQ_knots_half}),
\begin{align*}
& h(x) - ({\PP}_mh)(x) \\
& = h(x) - \sum_{j=i+1-m}^{N}h(y_j)M_{m,j}(x) - \sum_{j=N+1}^{i+m-1}h^{(j-N)}(b) M_{m,j}(x)  -\sum_{j=i-1}^{i+1} h(y_j)L_{m,j}(x) \\
& \quad \quad + \sum_{j=i-1}^{i+1} \left[ \sum_{k=j+1-m}^{N} h(y_k) M_{m,k}(y_j) + \sum_{k=N+1}^{j+m-1} h^{(k-N)}(b) M_{m,k}(y_j) \right] L_{m,j}(x).
\end{align*}
Now, from (\ref{eq_h_def}) and (\ref{eq_h_der_1}), we have
\begin{align}
& \sum_{j=1}^{i+m-1-N}|h^{(j)}(b)| \\
& = \sum_{j=1}^{i+m-1-N} |(m-1)(m-2)\cdots(m-j)(b-y)^{m-1-j}_{+}| \nonumber \\
& \leq (N+1-i)\left[((m-1)\varepsilon)^{m-2} + ((m-1)(m-2)\varepsilon)^{m-3} + \cdots \right. \nonumber \\
& \quad \quad {\left. + ((m-1)(m-2)\cdots(m-(i+m-1-N))\varepsilon)^{m-1-(i+m-1-N)}\right]} \nonumber \\
& \leq (m-1)\sum_{k=1}^{m-1}\left[\prod_{\ell=1}^{k}(m-\ell)\right]^{m-1-k}\varepsilon^{m-1-k} \leq 
(m-1)((m-1)!)^{m-2}\frac{1-\varepsilon^{m-1}}{1-\varepsilon}.
\label{eq_h_der_3}
\end{align}
This, together with the upper bounds (\ref{eq_M_H_upper_bound}) and (\ref{eq_L_H_upper_bound}), and (\ref{eq_h_zero}), (\ref{eq_h_der_1}), leads to
\begin{align*}
& |h(x) - ({\PP}_mh)(x)| \\
& \leq |h(x)| + \frac{1}{(m-2)!}\left(\frac{\gamma}{\delta}\right)^{m-1}\sum_{j=i}^{N}|h(y_j)| + \frac{1}{(m-2)!}\left(\frac{\gamma}{\delta}\right)^{m-1}\sum_{j=1}^{i+m-1-N}|h^{(j)}(b)| \\
& \quad \quad + \sum_{j=i}^{i+1}|h(y_j)| + \frac{1}{(m-2)!}\left(\frac{\gamma}{\delta}\right)^{m-1}\sum_{j=i-1}^{i+1} \left[ \sum_{k=j+1-m}^{N}|h(y_k)| + \sum_{k=1}^{j+m-1-N}|h^{(k)}(b)| \right]\\
& \leq \varepsilon^{m-1} + \frac{1}{(m-2)!}\left(\frac{\gamma}{\delta}\right)^{m-1}\left( \varepsilon^{m-1} + \cdots + ((m-1)\varepsilon)^{m-1} \right) \\
& \quad \quad  + \frac{1}{(m-2)!}\left(\frac{\gamma}{\delta}\right)^{m-1} \left[ (m-1)((m-1)!)^{m-2} \frac{1-\varepsilon^{m-1}}{1-\varepsilon} \right] + \left(\varepsilon^{m-1} + (2\varepsilon)^{m-1}\right) \\
& \quad \quad  + \frac{1}{(m-2)!}\left(\frac{\gamma}{\delta}\right)^{m-1} \left[ 3\sum_{k=i}^{N}|h(y_k)| + \sum_{k=1}^{i+m-2-N}|h^{(k)}(b)| \right. \\
& \pushright{ \left. + \sum_{k=1}^{i+m-1-N}|h^{(k)}(b)| + \sum_{k=1}^{i+m-N}|h^{(k)}(b)| \right] } \\
& \leq (2+2^{m-1})\varepsilon^{m-1} + \tilde{C}_1\varepsilon^{m-1}\left(\frac{\gamma}{\delta}\right)^{m-1} + \frac{4(m-1)((m-1)!)^{m-2}}{(m-2)!}\left(\frac{\gamma}{\delta}\right)^{m-1}  \frac{1-\varepsilon^{m-1}}{1-\varepsilon},
\end{align*}
where $\tilde{C}_1$ is a constant only depending on $m$. Therefore, (\ref{eq_proof_error_4}) becomes
\begin{multline*}
|f(x) - ({\PP}_m f)(x)|
\leq ||f^{(m)}||_{\infty,[x_i,x_{i+1}]}\left[ \frac{(2+2^{m-1})\varepsilon^{m}}{(m-1)!} + \frac{\tilde{C}_1\varepsilon^{m}}{(m-1)!}\left(\frac{\gamma}{\delta}\right)^{m-1} \right. \\
\left. + \frac{4(m-1)((m-1)!)^{m-3}\varepsilon}{(m-2)!}\left(\frac{\gamma}{\delta}\right)^{m-1}  \left( \frac{1-\varepsilon^{m-1}}{1-\varepsilon} \right) \right],
\end{multline*}
and our result follows with $C_1=\frac{(2+2^{m-1})}{(m-1)!}$, $C_2=\frac{\tilde{C}_1}{(m-1)!}$ and $C_3=\frac{4(m-1)((m-1)!)^{m-3}}{(m-2)!}$.\\

Lastly, let $i\in\left\{N-1,N\right\}$, with $x\in[x_i,x_{i+1}]$. We proceed to prove the fourth inequality in (\ref{eq_error_P_H}). In this case, we have, from the definition of ${\PP}_m$ in (\ref{eq_PP_knots_half}), the spline molecule support properties in (\ref{eq_M_support_knots_half}) and (\ref{eq_L_support_knots_half}), and the definition of ${\QQ}_m$ in (\ref{eq_QQ_knots_half}),
\begin{align*}
& h(x) - ({\PP}_mh)(x) \\
& = h(x) - \sum_{j=i+1-m}^{N} h(y_j) M_{m,j}(x) - \sum_{j=N+1}^{i+m-1} h^{(j-N)}(b)M_{m,j}(x) \\
& \quad \quad - \sum_{j=i-1}^{N}h(y_j)L_{m,j}(x) - \sum_{j=N+1}^{N+m-1} h^{(j-N)}(b) L_{m,j}(x) \\
& \quad \quad + \sum_{j=i-1}^{N} \left[ \sum_{k=j+1-m}^{N} h(y_k) M_{m,k}(y_j) + \sum_{k=N+1}^{j+m-1} h^{(k-N)}(b)M_{m,k}(y_j) \right] L_{m,j}(x) \\
& \quad \quad + \sum_{j=N+1}^{N+m-1} \left[ \sum_{k=j+1-m}^{N} h(y_k) M_{m,k}^{(j-N)}(b) + \sum_{k=N+1}^{j+m-1} h^{(k-N)}(b)M_{m,k}^{(j-N)}(b) \right] L_{m,j}(x).
\end{align*}
Next, we apply the upper bounds (\ref{eq_M_H_upper_bound}), (\ref{eq_L_H_upper_bound}) and (\ref{eq_M_H_der_upper_bound}), together with (\ref{eq_h_zero}), (\ref{eq_h_der_1}) and (\ref{eq_h_der_3}), to deduce that
\begin{align*}
& |h(x) - ({\PP}_mh)(x)| \\
& \leq |h(x)| + \frac{1}{(m-2)!}\left(\frac{\gamma}{\delta}\right)^{m-1} \sum_{j=i}^{N} |h(y_j)| + \frac{1}{(m-2)!}\left(\frac{\gamma}{\delta}\right)^{m-1} \sum_{j=1}^{i+m-1-N} |h^{(j)}(b)| \\
& \quad \quad + m\sum_{j=i}^{N}|h(y_j)| + m\tau\sum_{j=1}^{m-1}|h^{(j)}(b)| \\
& \quad \quad + m\frac{1}{(m-2)!}\left(\frac{\gamma}{\delta}\right)^{m-1}\sum_{j=i-1}^{N} \left[ \sum_{k=j+1-m}^{N} |h(y_k)| + \sum_{k=1}^{j+m-1-N}|h^{(k)}(b)| \right] \\
& \quad \quad + m\tau\frac{1}{(m-2)!}\left(\frac{\gamma}{\delta}\right)^{m-1}m\left(\frac{2}{\delta}\right)^{m-1} { \sum_{j=1}^{m-1} \left[ \sum_{k=j+1-m+N}^{N}|h(y_k)| + \sum_{k=1}^{m-1} |h^{(k)}(b)| \right] }\\
& \leq \varepsilon^{m-1} + \frac{1}{(m-2)!}\left(\frac{\gamma}{\delta}\right)^{m-1} \left(\varepsilon^{m-1} + (2\varepsilon)^{m-1}\right) \\
& \quad \quad + \frac{1}{(m-2)!}\left(\frac{\gamma}{\delta}\right)^{m-1} 2((m-1)!)^{m-2}\frac{1-\varepsilon^{m-1}}{1-\varepsilon} + m\left( \varepsilon^{m-1} + (2\varepsilon)^{m-1} \right)\\
& \quad \quad  + m\tau2((m-1)!)^{m-2}\frac{1-\varepsilon^{m-1}}{1-\varepsilon}\\
& \quad \quad + \frac{m}{(m-2)!}\left(\frac{\gamma}{\delta}\right)^{m-1} 3\left[ \left(\varepsilon^{m-1} + (2\varepsilon)^{m-1}\right) + 2((m-1)!)^{m-2}\frac{1-\varepsilon^{m-1}}{1-\varepsilon} \right] \\
& \quad \quad + \frac{m\tau}{(m-2)!}\left(\frac{\gamma}{\delta}\right)^{m-1}m\left(\frac{2}{\delta}\right)^{m-1} (m-1) \times \\
& \pushright{  \left[ \left( \varepsilon^{m-1} + (2\varepsilon)^{m-1} \right) + 2((m-1)!)^{m-2}\frac{1-\varepsilon^{m-1}}{1-\varepsilon} \right] }\\
& \leq (1+m+m2^{m-1})\varepsilon^{m-1} + \frac{(1+2^{m-1}+3m+3m2^{m-1})\varepsilon^{m-1}}{(m-2)!}\left(\frac{\gamma}{\delta}\right)^{m-1} \\
& \quad \quad + \frac{(6m+2)((m-1)!)^{m-2}}{(m-2)!}\left(\frac{\gamma}{\delta}\right)^{m-1} \frac{1-\varepsilon^{m-1}}{1-\varepsilon}  \\
& \quad \quad + 2m\tau((m-1)!)^{m-2}\frac{1-\varepsilon^{m-1}}{1-\varepsilon} + \frac{(1+2^{m-1})m^2\tau\varepsilon^{m-1}(m-1)}{(m-2)!}\left(\frac{\gamma}{\delta}\right)^{m-1}\left(\frac{2}{\delta}\right)^{m-1} \\
& \pushright{ + \frac{2m^2\tau(m-1)((m-1)!)^{m-2}}{(m-2)!}\left(\frac{\gamma}{\delta}\right)^{m-1}\left(\frac{2}{\delta}\right)^{m-1}\frac{1-\varepsilon^{m-1}}{1-\varepsilon}. }
\end{align*}
Therefore, (\ref{eq_proof_error_4}) becomes
\begin{align*}
&|f(x) - ({\PP}_m f)(x)| \\
& \leq ||f^{(m)}||_{\infty,[x_i,x_{i+1}]}\left[ \frac{(1+m+m2^{m-1})\varepsilon^{m}}{(m-1)!} + \frac{(1+2^{m-1}+3m+3m2^{m-1})\varepsilon^{m}}{(m-2)!(m-1)!}\left(\frac{\gamma}{\delta}\right)^{m-1} \right. \\
& \quad \quad \left. + \frac{(6m+2)((m-1)!)^{m-3}\varepsilon}{(m-2)!}\left(\frac{\gamma}{\delta}\right)^{m-1} \left( \frac{1-\varepsilon^{m-1}}{1-\varepsilon}\right) + 2m((m-1)!)^{m-3}\tau\varepsilon\left( \frac{1-\varepsilon^{m-1}}{1-\varepsilon}\right) \right. \\
& \quad \quad \left. + \frac{(1+2^{m-1})m^2\tau\varepsilon^{m}(m-1)}{(m-1)!(m-2)!}\left(\frac{\gamma}{\delta}\right)^{m-1}\left(\frac{2}{\delta}\right)^{m-1} \right. \\
& \pushright{ \left. + \frac{2m^2\tau\varepsilon(m-1)((m-1)!)^{m-3}}{(m-2)!}\left(\frac{\gamma}{\delta}\right)^{m-1}\left(\frac{2}{\delta}\right)^{m-1}\left( \frac{1-\varepsilon^{m-1}}{1-\varepsilon}\right) \right], }
\end{align*}
thereby completing our proof of (\ref{eq_error_P_H}), with $D_1=\frac{(1+m+m2^{m-1})}{(m-1)!}$, $D_2=\frac{1+2^{m-1}+3m+3m2^{m-1}}{(m-2)!(m-1)!}$, $D_3=\frac{(1+2^{m-1})m^2(m-1)}{(m-1)!(m-2)!}$, $D_4=\frac{(6m+2)((m-1)!)^{m-3}}{(m-2)!}$, $D_5=2m((m-1)!)^{m-3}$ and \\$D_6=\frac{2m^2(m-1)((m-1)!)^{m-3}}{(m-2)!}$.
\end{proof}

\section{Final remarks}
\label{sec_final}

In this paper, we developed a local polynomial spline interpolation scheme for arbitrary spline order on bounded intervals. Our method's local formulation, effective boundary considerations and interpolation error rate make it particularly useful for real-time implementation in real-world applications.

\section*{Acknowledgment}
This paper was written during my PhD studies at the University of Missouri-St. Louis. I would like to thank my mentor, Charles K. Chui, for his guidance during this research.

\bibliographystyle{plain}
\bibliography{mybib}

\end{document}